\documentclass[a4paper,reqno]{amsart}
\title{Notes on transformations in integrable geometry}
\author{Fran Burstall}
\address{Department of Mathematical Sciences\\ University of Bath\\
  Bath BA2 7AY\\UK} \email{feb@maths.bath.ac.uk}
\usepackage{amssymb,amsmath,fullpage,amsthm,graphicx,paralist,para}
\usepackage{tikz}
\usepackage[pdftex,plainpages=false,pdfpagelabels,linktocpage,pdfstartview=FitH,colorlinks=true,linkcolor=blue,citecolor=magenta]{hyperref}
\usepackage[initials,msc-links]{amsrefs}[2007/10/22]
\newtheorem{thm}{Theorem}[section]
\newtheorem{prop}[thm]{Proposition}

\newtheorem{lem}[thm]{Lemma}
\newtheorem*{proposition*}{Proposition}
\newtheorem*{theorem*}{Theorem}

\theoremstyle{definition}
\newtheorem*{xmpls}{Examples}

\newtheorem*{defn}{Definition}

\newtheorem*{ex}{Exercise}

\theoremstyle{remark}
\newtheorem*{rem}{Remark}
\newtheorem*{rems}{Remarks}
\numberwithin{equation}{section}

\newcommand{\R}{\mathbb{R}}
\newcommand{\C}{\mathbb{C}}
\newcommand{\Z}{\mathbb{Z}}

\renewcommand{\P}{\mathbb{P}}

\renewcommand{\d}{\mathrm{d}}
\newcommand{\del}{\partial}

\newcommand{\<}{\langle}
\renewcommand{\>}{\rangle}

\newcommand{\unC}{\underline{\C}}
\newcommand{\fso}{\mathfrak{so}}

\newcommand{\rO}{\mathrm{O}}
\newcommand{\rSO}{\mathrm{SO}}

\newcommand{\cL}{\mathcal{L}}
\newcommand{\cD}{\mathcal{D}}
\newcommand{\cN}{\mathcal{N}}

\newcommand{\Ad}{\mathrm{Ad}}
\newcommand{\ad}{\mathrm{ad}}

\newcommand{\hd}{\hat{\d}}

\newcommand{\hL}{\hat{L}}

\newcommand{\hf}{\hat{f}}
\newcommand{\hN}{\hat{N}}
\newcommand{\g}{\mathfrak{g}}
\newcommand{\p}{\mathfrak{p}}
\newcommand{\set}[1]{\{#1\}}

\newcommand{\I}{\mathrm{I}}
\newcommand{\II}{\mathrm{I\kern-1ptI}}
\newcommand{\III}{\mathrm{I\kern-1ptI\kern-1ptI}}
\newcommand{\half}{\tfrac12}
\newcommand{\norm}[1]{\Vert#1\Vert}

\DeclareSymbolFont{script}{U}{eus}{m}{n}
\DeclareSymbolFontAlphabet{\mathscr}{script}
\DeclareMathSymbol{\EuWedge}{0}{script}{"5E}
\newcommand{\Wedge}{\EuWedge}

\begin{document}
\maketitle

\section*{Prospectus}
\label{sec:prospectus}

Roughly speaking, a differential-geometric system, be it smooth,
discrete or semi-discrete, is integrable if it has some or all of the
following properties:
\begin{enumerate}[1.]
\item an infinite-dimensional symmetry group.
\item explicit solutions.
\item algebro-geometric solutions via spectral curves and/or theta
  functions.
\end{enumerate}
In these talks, I shall focus on a manifestation of the first item:
transformations whereby new solutions are constructed from old.  The
theory applies in many situations including:
\begin{itemize}
\item surfaces in $\R^3$ with constant mean or Gauss
  curvature \cite{Bac83,Bia79,Lie79} or,
  more generally, linear Weingarten surfaces in $3$-dimensional
  spaces forms.
\item (constrained) Willmore surfaces in $S^n$ \cite{BurQui14}.
\item projective minimal and Lie minimal surfaces in $\P^3$
  and $S^3$ respectively \cite{BurHer02,Cla12}.
\item affine spheres \cite{BobSch99,FerSch99}.
\item harmonic maps of a surface into a pseudo-Riemannian symmetric
  space \cite{TerUhl00,Uhl89}: this includes many of the preceding examples via some form
  of Gauss map construction.
\item isothermic surfaces in $S^n$
  \cite{Bia05,Bia05a,Cal03,Dar99e,Bur06,Sch01} or, more
  generally, isothermic submanifolds in symmetric
  $R$-spaces \cite{BurDonPedPin11}.
\item M\"obius flat submanifolds of $S^n$
  \cite{BurCal10,BurCal}: these include Guichard surfaces
  and conformally flat submanifolds with flat normal
  bundles, in particular, conformally flat hypersurfaces.
\item omega surfaces in Lie sphere geometry \cite{Pem15}.
\item curved flats in pseudo-Riemannian symmetric spaces: these are
  related to the last four items.
\item self-dual Yang--Mills fields \cite{Cra87}: many of our low-dimensional
  examples are dimensional reductions of these \cite{War85}.
\end{itemize}

In these lectures, I shall discuss this theory via two examples:
\emph{$K$-surfaces} (surfaces in $\R^3$ of constant Gauss curvature)
and \emph{isothermic surfaces}.  In both cases, I will emphasise:
\begin{itemize}
\item the geometry of transformations
\item a gauge-theoretic approach of wide applicability.
\end{itemize}

\section{$K$-surfaces}
\label{sec:k-surfaces}

\subsection{Classical surface geometry}
\label{sec:class-surf-geom}

Let $f:\Sigma^2\to\R^3$ be an immersion with Gauss map $N:\Sigma\to
S^2$.  Thus:
\begin{equation*}
  N\cdot \d f=0.
\end{equation*}
These yield three invariant quadratic forms:
\begin{align*}
  \mathrm{I}&:=\d f\cdot\d f\\
  \II&:=-\d f\cdot\d N\\
  \III&:=\d N\cdot \d N
\end{align*}
and the famous theorem of Bonnet says that the first two determine
$f$ up to a rigid motion.

Lowering an index on $\II$ gives the \emph{shape operator} $S:=-(\d
f)^{-1}\circ\d N$, a symmetric (with respect to $\mathrm{I}$)
endomorphism on T$\Sigma$.  The shape operator has eigenvalues
$\kappa_1,\kappa_2$, the \emph{principal curvatures} from which the
\emph{mean curvature} $H$ and the \emph{Gauss curvature} $K$ are
given by
\begin{align*}
  H&:=\half(\kappa_1+\kappa_2)\\
  K&:=\kappa_1\kappa_2.
\end{align*}
Further, the Cayley--Hamilton theorem applied to $S$ gives:
\begin{equation}
  \label{eq:1}
  \III-2H\II+K\mathrm{I}=0.
\end{equation}

\subsection{Lelieuvre's Formula}
\label{sec:lelieuvres-formula}

Let us suppose that $K<0$ and write $K=-1/\rho^2$.  In this case, $f$
admits asymptotic coordinates $\xi,\eta$, thus:
\begin{equation*}
  N_{\xi}\cdot f_{\xi}=0=N_{\eta}\cdot f_{\eta}.
\end{equation*}
It follows at once that there are functions $a,b$ so that
\begin{equation*}
  a(N\times N_{\xi})=f_{\xi}\qquad b(N\times N_{\eta})=f_{\eta}.
\end{equation*}
Now the symmetry of $\II$: $N_{\xi}\cdot f_{\eta}=N_{\eta}\cdot
f_{\xi}$, rapidly yields $a=-b$ while \eqref{eq:1} evaluated on
$\del_{\xi}$ gives $a^2=\rho^2$ so that we have \emph{Lelieuvre's
  Formula}:
\begin{equation}
  \label{eq:2}
  \begin{split}
    \rho(N\times N_{\xi})&=f_{\xi}\\
    \rho(N\times N_{\eta})&=-f_{\eta}.
  \end{split}
\end{equation}
Cross-differentiating \eqref{eq:2} gives us two formulae for the
tangential component of $f_{\xi\eta}$:
\begin{equation*}
  f_{\xi\eta}{}^T=\rho_{\eta}N\times N_{\xi}+\rho N\times
  N_{\xi\eta}=
  \rho_{\xi}N\times N_{\eta}-\rho N\times
  N_{\xi\eta}.
\end{equation*}
From this and the linear independence of $N\times N_{\xi}$ and
$N\times N_{\eta}$, we easily see that $f_{\xi\eta}{}^T=0$ if and
only if $\rho$ is constant if and only if $N\times N_{\xi\eta}=0$,
that is, $N:(\Sigma,\II)\to S^2$ is a harmonic map.  Finally,
$f_{\xi\eta}{}^T=0$ if and only if
\begin{equation*}
  (f_{\xi}\cdot f_{\xi})_{\eta}=0=(f_{\eta}\cdot f_{\eta})_{\xi}.
\end{equation*}
We conclude:
\begin{thm}
  \label{th:1}
  The following are equivalent:
  \begin{compactitem}
    \item $K$ is constant.
    \item $N:(\Sigma,\II)\to S^2$ is harmonic.
    \item Asymptotic coordinates can be chosen so that $\Vert
      f_{\xi}\Vert=1=\Vert f_{\eta}\Vert$. We say such coordinates are
      \emph{Tchebyshev} for $f$.
  \end{compactitem}
\end{thm}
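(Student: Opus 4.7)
My plan is to treat $f_{\xi\eta}{}^T = 0$ as a hub condition and show that each of the three listed properties is equivalent to it. The essential ingredients are already in place: I just need to exploit the two expressions for $f_{\xi\eta}{}^T$ obtained by cross-differentiating Lelieuvre's formula \eqref{eq:2}, together with the linear independence of $N\times N_{\xi}$ and $N\times N_{\eta}$. This independence is immediate from Lelieuvre: since $\rho\ne 0$, these vectors are nonzero scalar multiples of $f_{\xi}$ and $f_{\eta}$, which are independent because $f$ is an immersion.

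\textbf{First and second conditions.} Adding the two expressions for $f_{\xi\eta}{}^T$ yields
\begin{equation*}
  2 f_{\xi\eta}{}^T = \rho_{\eta}\, N\times N_{\xi} + \rho_{\xi}\, N\times N_{\eta},
\end{equation*}
so by the above independence $f_{\xi\eta}{}^T = 0$ is equivalent to $\rho_{\xi} = \rho_{\eta} = 0$, i.e.\ to $\rho$ being constant, which (since $K = -1/\rho^2$) is equivalent to $K$ being constant. Subtracting the two expressions gives
\begin{equation*}
  \rho_{\eta}\, N\times N_{\xi} - \rho_{\xi}\, N\times N_{\eta} = -2\rho\, N\times N_{\xi\eta};
\end{equation*}
under $\rho$ constant the left side vanishes and we conclude $N\times N_{\xi\eta} = 0$, while conversely $N\times N_{\xi\eta} = 0$ forces $\rho_{\xi} = \rho_{\eta} = 0$ by the independence argument. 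Since asymptotic coordinates are null (hence Lorentz-conformal) for $\II$, the Laplace--Beltrami operator acts on components as a multiple of $\del_{\xi}\del_{\eta}$, so $N\colon (\Sigma,\II)\to S^2$ is harmonic precisely when $N_{\xi\eta}$ is parallel to $N$, that is, $N\times N_{\xi\eta} = 0$.

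\textbf{Third condition.} Differentiating $f_{\xi}\cdot f_{\xi}$ in $\eta$ gives $2 f_{\xi}\cdot f_{\xi\eta} = 2 f_{\xi}\cdot f_{\xi\eta}{}^T$, because the normal part of $f_{\xi\eta}$ is orthogonal to $f_{\xi}$; the symmetric computation handles $(f_{\eta}\cdot f_{\eta})_{\xi}$. Hence $f_{\xi\eta}{}^T = 0$ iff $(f_{\xi}\cdot f_{\xi})_{\eta} = 0 = (f_{\eta}\cdot f_{\eta})_{\xi}$, i.e.\ iff $\norm{f_{\xi}}$ depends only on $\xi$ and $\norm{f_{\eta}}$ only on $\eta$. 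When this holds, the reparameterisations $\tilde\xi(\xi) := \int\norm{f_{\xi}}\,d\xi$ and $\tilde\eta(\eta) := \int\norm{f_{\eta}}\,d\eta$ preserve the asymptotic directions and give $\norm{f_{\tilde\xi}} = 1 = \norm{f_{\tilde\eta}}$; the converse is trivial.

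I anticipate no real obstacle: the heavy lifting has already been absorbed into Lelieuvre's formula, and what remains is essentially linear algebra together with a one-variable reparameterisation. The only subtlety worth flagging is the identification of $N_{\xi\eta}\parallel N$ with harmonicity of $N\colon (\Sigma,\II)\to S^2$, which relies on the observation that asymptotic coordinates for $f$ are null coordinates for $\II$ when $K<0$, so that the harmonic map equation reduces to the vanishing of the tangential part of $N_{\xi\eta}$.
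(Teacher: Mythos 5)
Your proposal is correct and takes essentially the same route as the paper: both cross-differentiate Lelieuvre's formula, use the linear independence of $N\times N_{\xi}$ and $N\times N_{\eta}$ (guaranteed since these are nonzero multiples of $f_{\xi},f_{\eta}$), and use $f_{\xi\eta}{}^T=0$ as the hub condition equivalent to each of the three properties. You merely spell out details the paper compresses into ``we easily see''---the add/subtract manipulation, the identification of harmonicity of $N\colon(\Sigma,\II)\to S^2$ with $N\times N_{\xi\eta}=0$ via null coordinates, and the explicit reparametrisation $\tilde\xi=\int\norm{f_{\xi}}\,\d\xi$, $\tilde\eta=\int\norm{f_{\eta}}\,\d\eta$---all of which are correct.
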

\subsection{Geometry of $K$-surfaces}
\label{sec:geometry-k-surfaces}

\begin{defn}
  $f:\Sigma\to\R^3$ is a \emph{$K$-surface} if it has constant,
  negative Gauss curvature.
\end{defn}

From Theorem~\ref{th:1}, we see that a $K$-surface admits Tchebyshev
coordinates $\xi,\eta$ with respect to which we have
\begin{align*}
  \mathrm{I}&=\d\xi^2+2\cos\omega\d\xi\d\eta+\d\eta^2\\
  \II&=\frac{2}{\rho}\sin\omega\d\xi\d\eta,
\end{align*}
where $\omega$ is the angle between the coordinate directions.

For such $\mathrm{I},\II$, the Codazzi equations are vacuous while
the Gauss equation reads
\begin{equation}
  \label{eq:3}
  \omega_{\xi\eta}=\frac{1}{\rho^2}\sin\omega.
\end{equation}
Thus any solution of \eqref{eq:3} gives rise to a $K$-surface.

Let us now turn to the symmetries of the situation:

\subsubsection{B\"acklund transformations}
\label{sec:backl-transf}

Let $f$ be a $K$-surface and, following Bianchi (1879) and B\"acklund
(1883), seek $\hf:\Sigma\to\R^3$ such that:
\begin{itemize}
\item $\hf-f$ is tangent to both $f$ and $\hf$.
\item $\norm{\hf-f}$ is constant.
\item $\hN\cdot N$ is constant. (Bianchi considered the case
  $\hN\cdot N=0$.)
\end{itemize}
Then:
\begin{enumerate}
\item $\hf$ exists if and only if $f$ is a $K$-surface (and then, by
  symmetry, $\hf$ is a $K$-surface too, in fact with the same value
  of $K$).
\item Given $a>0$, $p_0\in\Sigma$ and a ray $\ell_0\subset
  T_{p_0}\Sigma$, one solves commuting ODE to get (locally) a unique $\hf$ with
  \begin{align*}
    \norm{\hf-f}&=\frac{2\rho}{a+a^{-1}}\\
    \hN\cdot N&=\frac{a^{-1}-a}{a^{-1}+a}\\
    \hf(p_{0})&\in\ell_0.
  \end{align*}
  Thus, if $\hN\cdot N=\cos\theta$, $\norm{\hf-f}=\rho\sin\theta$.

  We write $\hf=f_a$ and say that $f_a$ is a \emph{B\"acklund
    transform} of $f$,
\item $\xi,\eta$ are asymptotic, in fact Tchebyshev, for $\hf$ too.
  In classical terminology, $f$ and $\hf$ are the focal surfaces of a
  \emph{$W$-congruence}.
\item Permutability (Bianchi \cite{Bia92}, 1892): given a
  $K$-surface
  $f$
  and two B\"acklund transforms $f_a, f_b$
  with $a\neq b$,
  one can choose initial conditions so that there is a
  fourth $K$-surface $\hf$ with
  \begin{equation*}
    \hf=(f_a)_b=(f_b)_a.
  \end{equation*}
  \begin{ex}
    $\hf$ is algebraic in $f,f_{a},f_b$.
  \end{ex}
  One can iterate the procedure and so build up a quad-graph of
  $K$-surfaces.  At each point $p\in\Sigma$, the corresponding
  quad-graph of points in $\R^3$ is a discrete $K$-surface in the
  sense of Bobenko--Pinkall.
\end{enumerate}

\subsubsection{Lie transform}
\label{sec:lie-transform}

If $\xi,\eta$ are Tchebyshev coordinates for a $K$-surface $f$, we
have seen that the Gauss--Codazzi equations reduce to the sine-Gordon
equation~\eqref{eq:3} for the angle $\omega$ between coordinate
directions.

However, for $\mu\in\R^{\times}$, we observe that
\begin{equation*}
  \omega^{\mu}(\xi,\eta):=\omega(\mu^{-1}\xi,\mu\eta)
\end{equation*}
also solves \eqref{eq:3} and so gives rise to a new $K$-surface
$f^{\mu}$ with
\begin{equation*}
  \mathrm{I}_{f^{\mu}}=\d\xi^2+2\cos\omega^{\mu}\d\xi\d\eta+\d\eta^2.
\end{equation*}
Such an $f^{\mu}$ is a \emph{Lie transform} of $f$.

\subsection{Harmonic maps and flat connections}
\label{sec:harmonic-maps-flat}

We have seen that $K$-surfaces give rise to harmonic maps
$(\Sigma,\II)\to S^2$.  The converse is also true: let $c$ be a
conformal structure on $\Sigma$ of signature $(1,1)$, $*$ the
Hodge-star of $c$ and $\xi,\eta$ null coordinates.  Let
$N:(\Sigma,c)\to S^2$ so that
\begin{equation*}
  *\d N=N_{\xi}\d\xi-N_{\eta}\d\eta.
\end{equation*}
It is easy to see that $N$ is harmonic if and only if
\begin{equation*}
  \d(N\times *\d N)=0
\end{equation*}
in which case we can locally find $f:\Sigma\to\R^3$ such that
$N\times *\d N=\d f$, that is,
\begin{align*}
  N\times N_{\xi}&=f_{\xi}\\ N\times N_{\eta}&=-f_{\eta}.
\end{align*}
It follows at once that, whenever $f$, equivalently $N$, immerses,
\begin{enumerate}
\item $N\perp f_{\xi},f_{\eta}$ so that $N$ is the Gauss map of $f$.
\item $N_{\xi}\cdot f_{\xi}=0=N_{\eta}\cdot f_{\eta}$ so that
  $\xi,\eta$ are asymptotic for $f$ whence $c=\<\II\>$.
\item $K=-1$ after recourse to \eqref{eq:1}.
\end{enumerate}

\subsubsection{Flat connections}
\label{sec:flat-connections}

The basic observation for all that follows is that harmonic maps give
rise to a holomorphic family of flat connections on the trivial
bundle $\underline{\R}^3:=\Sigma\times\R^3$.  We rehearse this
construction in such a way as to indicate how it generalises to any
(pseudo)-Riemannian symmetric target.

So let $N:(\Sigma,c)\to S^2$ and $\rho^N:\Sigma\to\rO(3)$ be the
reflection across $N^{\perp}$.  The orthogonal decomposition
\begin{equation*}
  \underline{\R}^3=\<N\>\oplus N^{\perp}
\end{equation*}
induces a decomposition of the flat connection $\d$:
\begin{equation*}
  \d=\cD+\cN
\end{equation*}
where $N,\rho^N$ are $\cD$-parallel and $\cN\in\Omega^1(\fso(3))$
anti-commutes with $\rho^N$.

We shall several times have recourse to the identification
$\R^3\cong\fso(3)$ given by
\begin{equation}\label{eq:4}
  v\mapsto(u\mapsto v\times u)
\end{equation}
under which $\cN$ is identified with $N\times\d N$.

The structure equations of the situation express the flatness of $\d$
and read:
\begin{gather*}
  R^{\cD}+\half[\cN\wedge\cN]=0\\
  \d^{\cD}\cN=0,
\end{gather*}
while $N$ is harmonic if and only if $\d *\cN=0$, or, equivalently,
$\d^{\cD}*\cN=0$ since $[*\cN\wedge\cN]$ is always zero.

Now write
\begin{equation*}
  \cN=\cN^++\cN^-
\end{equation*}
where $*\cN^{\pm}=\pm\cN^{\pm}$.  Then we have
\begin{align*}
  \d^{\cD}\cN&=\d^{\cD}\cN^++\d^{\cD}\cN^-  =0\\
  \d^{\cD}*\cN&=\d^{\cD}\cN^+-\d^{\cD}\cN^-
\end{align*}
so that $N$ is harmonic if and only if $\d^{\cD}\cN^{\pm}=0$.

Let $\lambda\in\C^{\times}$ and define a connection $\d_{\lambda}$ on
$\underline\C^3$ by
\begin{equation*}
  \d_{\lambda}=\cD+\lambda\cN^{+}+\lambda^{-1}\cN^{-}.
\end{equation*}
Then, comparing coefficients of $\lambda$ in $R^{d_{\lambda}}$, we have:
\begin{prop}
  $N$ is harmonic if and only if $\d_{\lambda}$ is flat for all $\lambda\in\C^{\times}$.
\end{prop}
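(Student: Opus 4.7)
The plan is to compute $R^{\d_\lambda}$ directly and read off its dependence on $\lambda$. Writing $\d_\lambda = \cD + A_\lambda$ with $A_\lambda := \lambda\cN^+ + \lambda^{-1}\cN^-$, one has
\begin{equation*}
  R^{\d_\lambda} = R^\cD + \d^\cD A_\lambda + \half[A_\lambda\wedge A_\lambda].
\end{equation*}
The middle term contributes $\lambda\,\d^\cD\cN^+ + \lambda^{-1}\d^\cD\cN^-$, precisely the expression the excerpt identifies with harmonicity of $N$ via $\d^\cD\cN^\pm = 0$. The goal is then to show that the remaining terms are $\lambda$-independent and cancel by virtue of the structure equation $R^\cD + \half[\cN\wedge\cN] = 0$.

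Expanding $[A_\lambda\wedge A_\lambda]$ and using the symmetry $[\alpha\wedge\beta]=[\beta\wedge\alpha]$ for Lie-algebra valued 1-forms, this reduces to showing $[\cN^+\wedge\cN^+] = 0 = [\cN^-\wedge\cN^-]$. I would prove this by observing that the $\pm 1$-eigendistributions of $*$ on $\Omega^1$ are each of rank one — a consequence of $c$ having signature $(1,1)$ — so in null coordinates $\cN^+$ is a multiple of $\d\xi$ and $\cN^-$ a multiple of $\d\eta$, and the bracket-wedges $[\cN^\pm\wedge\cN^\pm]$ then carry a factor $\d\xi\wedge\d\xi$ or $\d\eta\wedge\d\eta$, which is zero. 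Hence $\half[A_\lambda\wedge A_\lambda] = [\cN^+\wedge\cN^-] = \half[\cN\wedge\cN]$, independent of $\lambda$.

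Consequently, $R^{\d_\lambda} = \lambda\,\d^\cD\cN^+ + \lambda^{-1}\d^\cD\cN^-$ for every $\lambda \in \C^\times$. Since $\lambda$ and $\lambda^{-1}$ are linearly independent as functions on $\C^\times$, flatness for all $\lambda$ is equivalent to $\d^\cD\cN^\pm = 0$, hence to harmonicity.

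The only piece of actual geometric content is the vanishing of $[\cN^\pm\wedge\cN^\pm]$; everything else is formal expansion that the excerpt has already set up.
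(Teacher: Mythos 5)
Your proof is correct and is essentially the paper's own argument: the paper disposes of this proposition with the single phrase ``comparing coefficients of $\lambda$ in $R^{\d_{\lambda}}$,'' and your computation is precisely that expansion, with the $\lambda^{0}$ term killed by the structure equation $R^{\cD}+\half[\cN\wedge\cN]=0$ and the residual $\lambda^{\pm1}$ terms $\d^{\cD}\cN^{\pm}$ matching the harmonicity criterion already established in the text. You correctly identify and justify the one non-formal ingredient the paper leaves implicit, namely $[\cN^{\pm}\wedge\cN^{\pm}]=0$ (so no $\lambda^{\pm2}$ terms arise), via the rank-one $\pm1$-eigenbundles of $*$ in signature $(1,1)$.
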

We note that $\d_{\lambda}$ has the following four properties:
\begin{enumerate}[(i)]
\item $\lambda\mapsto d^+_{\lambda}$ is holomorphic on $\C$ with a
  simple pole at $\infty$ while $\lambda\mapsto\d^-_{\lambda}$ is
  holomorphic on $\C^{\times}\cup\set\infty$ with a simple pole at $0$.
\item $\rho^N\cdot\d_{\lambda}=\d_{-\lambda}$.  Here, and below, for
  connection $D$ and gauge transformation $g:\Sigma\to\rO(3,\C)$,
  $g\cdot D=g\circ D\circ g^{-1}$, the usual action of gauge
  transformations on connections.
\item $\d_{\bar{\lambda}}=\overline{\d_{\lambda}}$.
\item $\d_1=\d$.
\end{enumerate}
\begin{ex}
  These properties uniquely determine $\d_{\lambda}$.
\end{ex}
Thus:
\begin{prop}
  $N$ is harmonic if and only if there is a family
  $\lambda\mapsto\d_{\lambda}$ of flat connections with properties (i)--(iv).
\end{prop}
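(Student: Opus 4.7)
The plan is to leverage the previous proposition together with the uniqueness exercise. The forward direction ($\Rightarrow$) requires no new work: if $N$ is harmonic, the family $\d_{\lambda} = \cD + \lambda\cN^+ + \lambda^{-1}\cN^-$ constructed above is flat by the previous proposition, and properties (i)--(iv) hold by inspection. For the reverse direction, given any family satisfying (i)--(iv), I would first dispatch the uniqueness exercise, namely that (i)--(iv) characterise the family above, and then invoke the previous proposition again: flatness of that specific family for all $\lambda\in\C^{\times}$ forces $N$ to be harmonic.

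To establish uniqueness, write $\d_{\lambda} = \d + A(\lambda)$, using (iv) to ensure $A(1)=0$, and split $A(\lambda) = A^+(\lambda) + A^-(\lambda)$ into $\d\xi$- and $\d\eta$-parts. Property (i) forces $A^+(\lambda)$ to be polynomial of degree $\le 1$ in $\lambda$ and $A^-(\lambda)$ to be polynomial of degree $\le 1$ in $\lambda^{-1}$; together with $A^{\pm}(1)=0$ this gives $A^+(\lambda) = (\lambda-1)\alpha^+$ and $A^-(\lambda) = (\lambda^{-1}-1)\alpha^-$ for some $\fso(3)$-valued 1-forms $\alpha^{\pm}$, which (iii) forces to be real. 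The crucial use of (ii) goes as follows. Recalling that $\cD$ commutes and $\cN$ anti-commutes with $\rho^N$, so $\rho^N\cdot\d = \cD - \cN$, the identity $\rho^N\cdot\d_{\lambda} = \d_{-\lambda}$ in its $\d\xi$-component becomes
\begin{equation*}
  (\lambda-1)\Ad(\rho^N)\alpha^+ + (\lambda+1)\alpha^+ = 2\cN^+,
\end{equation*}
and comparing coefficients of $\lambda$ yields $\Ad(\rho^N)\alpha^+ = -\alpha^+$, whence the constant term gives $\alpha^+ = \cN^+$. The $\d\eta$-component is entirely analogous, yielding $\alpha^- = \cN^-$ and hence $\d_{\lambda} = \cD + \lambda\cN^+ + \lambda^{-1}\cN^-$, as required.

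The main obstacle is precisely the bookkeeping in this uniqueness step: one must simultaneously track the $\pm 1$-eigenspace decomposition of $\fso(3)$ under $\Ad(\rho^N)$ and the null splitting into $\d\xi$- and $\d\eta$-parts, matching coefficients of $\lambda$ under the pole restrictions from (i). Once this is in hand, the equivalence of flatness and harmonicity from the previous proposition closes the argument.
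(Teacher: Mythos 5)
Your proposal is correct and takes essentially the same route the paper intends: the paper deduces the proposition from the preceding one together with the uniqueness exercise, which you solve by exactly the expected coefficient comparison — degree bounds from (i), normalisation $A(1)=0$ from (iv), and then (ii) in the form $\Ad(\rho^N)A(\lambda)=2\cN+A(-\lambda)$ forcing $\Ad(\rho^N)\alpha^{\pm}=-\alpha^{\pm}$ and $\alpha^{\pm}=\cN^{\pm}$. Your computation checks out in detail (note only that your appeal to (iii) for reality of $\alpha^{\pm}$ is harmless but redundant, since (ii) alone already pins down $\alpha^{\pm}=\cN^{\pm}$, which are real).
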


\subsection{Spectral deformation}
\label{sec:spectral-deformation}

Let $N$ be harmonic with flat connections $\d_{\lambda}$.  Since
$\d_{\lambda}$ is flat, there is a locally a trivialising gauge
$T_{\lambda}:\Sigma\to\rSO(3,\C)$, that is,
\begin{equation*}
  T_{\lambda}\cdot\d_{\lambda}=\d.
\end{equation*}

Now fix $\mu\in\R^{\times}$ and set
$\d^{\mu}_{\lambda}:=\d_{\lambda\mu}$.  We notice that
$\lambda\mapsto\d^{\mu}_{\lambda}$ has properties (i)--(iii) but
$\d^{\mu}_1=\d_{\mu}$.  It follows then that $\lambda\mapsto
T_{\mu}\cdot\d^{\mu}_{\lambda}$ has (i)--(iv) with respect to
$N^{\mu}:= T_{\mu}N:\Sigma\to S^2$.  We therefore conclude
\begin{thm}
  $N^{\mu}:(\Sigma,c)\to S^2$ is harmonic and so gives rise to a
  $K$-surface $f^{\mu}$.
\end{thm}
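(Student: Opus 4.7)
The plan is to verify the criterion established in the preceding Proposition: $N^\mu$ is harmonic if and only if it admits a family of flat connections satisfying (i)--(iv). I claim the right family is $\d^\mu_\lambda := T_\mu\cdot\d_{\lambda\mu}$, and I would prove the theorem by checking these four properties one at a time, with respect to the new geometric data $N^\mu = T_\mu N$ and $\rho^{N^\mu}=T_\mu\rho^N T_\mu^{-1}$.

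Before gauging, observe that the unadjusted reparametrization $\lambda\mapsto\d_{\lambda\mu}$ is already a family of flat connections with (i)--(iii) relative to the original $N,\rho^N$: rescaling $\lambda$ by the \emph{real} non-zero constant $\mu$ preserves the pole structure in (i), the equivariance (ii), and the reality (iii). The only defect is (iv), since $\d_{1\cdot\mu}=\d_\mu\neq\d$. Applying the trivializing gauge $T_\mu$ is designed exactly to fix this, giving $\d^\mu_1=T_\mu\cdot\d_\mu=\d$ by definition of $T_\mu$.

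Next I would argue that the gauge transports the remaining properties through to the new data. The key is that $T_\mu$ is independent of $\lambda$ and acts on the values of forms, not on the form part itself, so it commutes with the Hodge $*$ and preserves the $\pm$-eigensplitting $\cN=\cN^++\cN^-$. In particular $\d^\mu_\lambda=\cD^\mu+\lambda(\mu T_\mu\cN^+T_\mu^{-1})+\lambda^{-1}(\mu^{-1}T_\mu\cN^-T_\mu^{-1})$ with $\cD^\mu:=T_\mu\cdot\cD$ preserving $N^\mu$ and $\rho^{N^\mu}$, and the two $\cN^\mu$-pieces still anti-commuting with $\rho^{N^\mu}$; this gives (i). Property (ii) then follows from $\rho^{N^\mu}\cdot\d^\mu_\lambda=T_\mu\cdot(\rho^N\cdot\d_{\lambda\mu})=T_\mu\cdot\d_{-\lambda\mu}=\d^\mu_{-\lambda}$.

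The one delicate point, which I expect to be the main obstacle, is property (iii): reality. For $\overline{\d^\mu_\lambda}=\d^\mu_{\bar\lambda}$ we need $T_\mu$ itself to be real, i.e.\ $\overline{T_\mu}=T_\mu$. But since $\mu\in\R^\times$, property (iii) of the original family makes $\d_\mu$ a real connection, so a real trivializing gauge exists and is unique up to a constant element of $\rSO(3)$; we choose $T_\mu$ this way. With (i)--(iv) in hand, the Proposition delivers harmonicity of $N^\mu$, and then — assuming $N^\mu$ (equivalently $f^\mu$) is an immersion, which is the generic situation — the construction of section~\ref{sec:harmonic-maps-flat} produces the $K$-surface $f^\mu$.
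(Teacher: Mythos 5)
Your proposal is correct and follows essentially the same route as the paper: reparametrise to $\d^{\mu}_{\lambda}:=\d_{\lambda\mu}$, note that (i)--(iii) survive while (iv) fails, repair (iv) with the trivialising gauge $T_{\mu}$, and invoke the characterisation of harmonicity via families of flat connections with (i)--(iv) relative to $N^{\mu}=T_{\mu}N$. Your extra care over the reality of $T_{\mu}$ (choosing a real trivialising gauge for the real connection $\d_{\mu}$, $\mu\in\R^{\times}$) is a detail the paper leaves implicit, but it is a refinement of, not a departure from, the paper's argument.
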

Moreover,
\begin{equation*}
  \d N^{\mu}=T_{\mu}(\d_{\mu}N)=T_{\mu}(\mu\d^+N+\mu^{-1}\d^-N).
\end{equation*}
If $\xi,\eta$ are Tchebyshev for $f$, it follows from this that
$f^{\mu}$ has first fundamental form
\begin{equation*}
  I_{f^{\mu}}=\mu^2\d\xi^2+2\cos\omega\d\xi\d\eta+\mu^{-2}\d\eta^2.
\end{equation*}
Thus $\hat{\xi}=\mu\xi$ and $\hat{\eta}=\mu^{-1}\eta$ are Tchebyshev
for $f^{\mu}$ and the corresponding sine-Gordon solution is
$\omega(\mu^{-1}\hat{\xi},\mu\hat{\eta})$.  Otherwise said, $f^{\mu}$
is a Lie transform of $f$.

\subsection{Sym formula}
\label{sec:sym-formula}

Knowing the trivialising gauges $T_{\lambda}$, allows us to compute
Lie transforms without integrations.  Indeed, by definition,
\begin{equation*}
  T_{\lambda}\circ\d_{\lambda}=\d\circ T_{\lambda}
\end{equation*}
and differentiating this with respect to $\lambda$ at $\mu$ yields:
\begin{equation*}
  \Ad_{T_{\mu}}(\del \d_{\lambda}/\del\lambda|_{\mu})=
  \d(\del T_{\lambda}/\del\lambda|_{\mu}T_{\mu}^{-1}).
\end{equation*}
The left side of this reads
\begin{equation*}
  \Ad_{T_{\mu}}(\cN^+-\cN^-/\mu^2)=\frac{1}{\mu}*\cN^{\mu},
\end{equation*}
or, using the identification \eqref{eq:4},
\begin{equation*}
  N^{\mu}\times *\d N^{\mu}= \mu\d(\del T_{\lambda}/\del\lambda|_{\mu}T_{\mu}^{-1}).
\end{equation*}
Thus we have the \emph{Sym formula} \cite{Sym85}:
\begin{equation}
  \label{eq:5}
  f^{\mu}=\mu\del T_{\lambda}/\del\lambda|_{\mu}T_{\mu}^{-1},
\end{equation}
where we use \eqref{eq:4} to view the right side as a map
$\Sigma\to\R^3$.  In particular, taking $\mu=1$ and assuming,
without loss of generality that $T_1=1$, we recover our original
$K$-surface:
\begin{equation}
  \label{eq:8}
  f=\del T_{\lambda}/\del\lambda|_{\lambda=1}.
\end{equation}

\subsection{Parallel sections and B\"acklund transformations}
\label{sec:parall-sect-backl}

Again we start with a harmonic $N:(\Sigma,c)\to S^2$ and its family
of flat connections $\d_{\lambda}$.  We seek to construct a
holomorphic family of gauge transformations
$r(\lambda):\Sigma\to\rSO(3,\C)$ so that the connections
$r(\lambda)\cdot d_{\lambda}$ have properties (i)--(iv) with respect
to a new map $\hN:\Sigma\to S^2$.  Since these gauged connections are
flat, $\hN$ will be harmonic.

We will build our gauge transformations from
$d_{\lambda}$-parallel
subbundles of $\unC^3$
using an avatar of a construction of Terng--Uhlenbeck
\cite{TerUhl00}.  First the algebra: for null line
subbundles $L,L^{*}\leq\unC^3$
with $L\cap L^{*}=\set0$, define
\begin{equation*}
  \Gamma^L_{L^{*}}(\lambda)=
  \begin{cases}
    \lambda&\text{on $L$}\\
    1&\text{on $(L\oplus L^{*})^{\perp}$}\\
    \lambda^{-1}&\text{on $L^{*}$}
  \end{cases}
  :\Sigma\to\rSO(3,\C).
\end{equation*}
The decisive properties of $\Gamma^L_{L^{*}}$ is that it takes values
in semisimple homomorphisms $\C^{\times}\to\rSO(3,\C)$ and that $\Ad
\Gamma^L_{L^{*}}$ has only simple poles at $0$ and $\infty$.

Now fix $a>0$ and choose $L$ so that:
\begin{enumerate}
\item $L$ is $d_{ia}$-parallel.
\item $\rho^NL=\bar{L}$.  Denote this bundle by $L^{*}$.
\item $L\cap L^{*}=\set0$.
\end{enumerate}
\begin{rems}
\item{}
  \begin{enumerate}
  \item The last two conditions amount to demanding that $(L\oplus
    L^{*})^{\perp}$ is a real line tangent to $N$ or, equivalently,
    $f$.  It is on this line that our B\"acklund transform will
    eventually lie.
  \item The conditions are compatible: both $\rho^NL$ and $\bar{L}$
    are $d_{-ia}$-parallel and so coincide as soon as they do so at
    an initial point.
  \item In fact, $L$ is completely determined by the data of a
    unit tangent vector $t$ at a single point $p_0\in\Sigma$. We take
    for $L_{p_{0}}$ and $L^{*}_{p_0}$ the $\pm i$-eigenspaces of
    $v\mapsto t\times v$ and then define $L$ and $L^{*}$ by parallel
    transport.  Of course, condition (3) may fail eventually.
  \end{enumerate}
\end{rems}
Thanks to condition (2), we have
\begin{equation}
\begin{split}
  \rho^N\Gamma^L_{L^{*}}(\lambda)&=\Gamma^L_{L^{*}}(\lambda^{-1})\\
  \overline{\Gamma^L_{L^{*}}(\lambda)}&=\Gamma^{L}_{L^{*}}(1/\bar{\lambda}).
\end{split}\label{eq:6}
\end{equation}

After all this preparation, we finally set:
\begin{equation*}
  r(\lambda)=\Gamma^L_{L^{*}}\left(\bigl(\frac{1+ia}{1-ia}\bigr)
    \bigl(\frac{\lambda-ia}{\lambda+ia}\bigr)\right).
\end{equation*}
We have:
\begin{itemize}
\item $\lambda\mapsto r(\lambda)$ is holomorphic on
  $\P^1\setminus\set{\pm ia}$.
\item $\overline{r(\lambda)}=r(\bar{\lambda})$ so that, in
  particular, $r(\lambda)$ takes values in $\rSO(3)$ for $\lambda\in\R\P^1$.
\item $r(-\lambda)\circ\rho^N\circ r(\lambda)^{-1}$ is independent of
  $\lambda$ and so coincides with $r(\infty)\rho^N
  r(\infty)^{-1}=\rho^{r(\infty)N}$.  Thus
  \begin{equation}
    \label{eq:7}
    r(-\lambda)\circ \rho^N=\rho^{r(\infty)N}\circ r(\lambda).
  \end{equation}
\item $r(1)=1$.
\end{itemize}
We therefore set:
\begin{align*}
  \hN&=r(\infty)N\\\hd_{\lambda}&=r(\lambda)\cdot\d_{\lambda}.
\end{align*}
\begin{prop}
  $\hd_{\lambda}$ has properties (i)--(iv) with respect to $\hN$.
\end{prop}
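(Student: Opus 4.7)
The plan is to verify each of (i)--(iv) in turn. The last three follow directly from bookkeeping with the listed properties of $r(\lambda)$, while (i) is where the hypothesis that $L$ is $\d_{ia}$-parallel really gets used.

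Properties (iv), (iii), (ii) are straightforward. For (iv), $\hd_1 = r(1)\cdot\d_1 = 1\cdot\d = \d$. For (iii), $\overline{\hd_\lambda} = \overline{r(\lambda)}\cdot\overline{\d_\lambda} = r(\bar\lambda)\cdot\d_{\bar\lambda} = \hd_{\bar\lambda}$. For (ii), rewrite \eqref{eq:7} as $\rho^{\hN}r(\lambda) = r(-\lambda)\rho^N$; since the gauge action is a group action, this gives $\rho^{\hN}\cdot\hd_\lambda = (\rho^{\hN}r(\lambda))\cdot\d_\lambda = (r(-\lambda)\rho^N)\cdot\d_\lambda = r(-\lambda)\cdot\d_{-\lambda} = \hd_{-\lambda}$.

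The real content is (i). At $\lambda = 0, \infty$, $r(\lambda)$ is holomorphic and invertible, so the simple pole structure of $\d_\lambda$ there is preserved under gauging by $r$. The main obstacle is that $r(\lambda)$ has singularities at $\lambda = \pm ia$ which could produce poles of $\hd_\lambda$. By reality (property (iii)), regularity at $-ia$ follows from regularity at $ia$, so I would focus on $\lambda = ia$. Working in a local moving frame $\sigma_1, \sigma_2, \sigma_3$ adapted to $\unC^3 = L \oplus L^* \oplus (L \oplus L^*)^\perp$, and setting $\mu := \phi(\lambda)$ (which has a simple zero at $\lambda = ia$), $r(\lambda)$ acts as $\mathrm{diag}(\mu, \mu^{-1}, 1)$ in this frame. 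A direct computation shows that the connection matrix $\hat\Omega_\lambda$ of $\hd_\lambda$ and that $\Omega_\lambda$ of $\d_\lambda$ are related by $(\hat\Omega_\lambda)_{ij} = \mu^{c_i - c_j}(\Omega_\lambda)_{ij}$, with $(c_1, c_2, c_3) = (1, -1, 0)$. Regularity at $\mu = 0$ then requires the entries with $c_j > c_i$---namely $(i,j) \in \{(2,1), (2,3), (3,1)\}$---to vanish at $\lambda = ia$ to order at least $c_j - c_i$.

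The hypothesis that $L$ is $\d_{ia}$-parallel, combined with the orthogonality of $\d_\lambda$, forces $L^\perp = L \oplus (L \oplus L^*)^\perp$ to be $\d_{ia}$-parallel as well; this yields $(\Omega_{ia})_{21} = (\Omega_{ia})_{23} = (\Omega_{ia})_{31} = 0$, which settles $(2,3)$ and $(3,1)$. The case $(2,1)$ demands vanishing to order two, and this comes from a separate algebraic observation: $\cN^{\pm}$ are $\fso(3,\C)$-valued 1-forms and $L$ is null, so $\cN^{\pm}L \subset L^\perp$ and hence $(\cN^{\pm})_{21} = 0$. Inserting this, together with $(\Omega_{ia})_{21} = 0$, into the expansion $\d_\lambda - \d_{ia} = (\lambda - ia)\cN^+ + (\lambda^{-1} - (ia)^{-1})\cN^-$ shows $(\Omega_\lambda)_{21}$ vanishes identically in $\lambda$, trivially giving the required vanishing. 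Hence $\hat\Omega_\lambda$ is regular at $\lambda = ia$, and (i) is established.
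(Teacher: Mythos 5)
Your proposal is correct, and for (ii)--(iv) it coincides with what the paper calls the ``straightforward verification''. The genuine difference is in (i): the paper dispatches the possible singularities of $\hd_{\lambda}$ at $\pm ia$ by appealing to Lemma~\ref{th:2}, which it states but does not prove (holomorphicity at the zero $ia$ of the linear fractional parameter corresponding to $L$ being $\d_{ia}$-parallel, and at the pole $-ia$ to $L^{*}=\bar{L}=\rho^NL$ being $\d_{-ia}$-parallel), whereas you prove regularity at $ia$ directly in a frame adapted to $\unC^3=L\oplus L^{*}\oplus(L\oplus L^{*})^{\perp}$ and then get $-ia$ for free from the reality symmetry $\overline{\hd_{\lambda}}=\hd_{\bar\lambda}$ instead of from parallelity of $L^{*}$. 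Both variations are sound: your observation that metricity of $\d_{ia}$ propagates parallelity from $L$ to $L^{\perp}=L\oplus(L\oplus L^{*})^{\perp}$ correctly kills the $(2,3)$ and $(3,1)$ entries, and the reality trick is a legitimate substitute for the condition on $L^{*}$ since the two are equivalent here. In effect you have supplied a proof of the relevant direction of Lemma~\ref{th:2}, and your computation surfaces a point that the bald statement of the lemma conceals: the $(2,1)$ entry of the gauged connection matrix carries the factor $\mu^{-2}$, and parallelity of $L$ alone yields only first-order vanishing there; what closes the gap is exactly what you invoke, namely skewness of $\cN^{\pm}$ together with nullity of $L$ (equivalently, and even more directly, $(\d_{\lambda}\sigma_1,\sigma_1)=\half\,\d(\sigma_1,\sigma_1)=0$ for \emph{any} metric connection, so $(\Omega_{\lambda})_{21}\equiv0$ without splitting off $\d_{ia}$ and $\cN^{\pm}$ at all). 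In particular, Lemma~\ref{th:2}, though phrased for ``any holomorphic family of connections'', implicitly requires the connections to be metric (as all connections in the paper are); for a general $\fgl(3,\C)$-valued family the ``if'' direction would fail at that doubly-singular entry. So the trade-off is: your route is self-contained and makes the metric hypothesis visible, while the paper's route isolates the mechanism in a reusable lemma (whose discrete counterpart, Lemma~\ref{th:3}, drives the permutability argument).
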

\begin{proof}
  This is all a straightforward verification except for item (i).
  Since $r(\lambda)$ is holomorphic near zero and infinity,
  $\hd_{\lambda}$ has the same poles there as $\d_{\lambda}$ so the
  main issue is to see that $\lambda\mapsto \hd_{\lambda}$ is
  holomorphic at $\pm ia$.  This is where the fact that $L$ and
  $L^{*}$ are parallel comes in and is an immediate consequence of
  the following
  \begin{lem}
    \label{th:2}
    Let $L,L^{*}$ be null line subbundles,
    $\lambda\mapsto\d_{\lambda}$ \emph{any} holomorphic family of
    connections and $\psi^{\alpha}_{\beta}$ any linear fractional
    transformation with a zero at $\alpha$ and a pole at $\beta$.

    Then
    $\lambda\mapsto\Gamma^L_{L^{*}}(\psi^{\alpha}_{\beta}(\lambda))\cdot\d_{\lambda}$
    is holomorphic at $\alpha$ if and only if $L$ is
    $\d^{\alpha}$-parallel and holomorphic at $\beta$ if and only if
    $L^{*}$ is $\d_{\beta}$-parallel. 
  \end{lem}
\end{proof}
We therefore conclude:
\begin{thm}
  $\hN:(\Sigma,c)\to S^2$ is harmonic with associated flat
  connections $\hd_{\lambda}=r(\lambda)\cdot\d_{\lambda}$.
\end{thm}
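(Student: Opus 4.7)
The plan is to invoke the earlier characterization of harmonic maps: $\hN:(\Sigma,c)\to S^2$ is harmonic if and only if there exists a family $\lambda\mapsto D_\lambda$ of flat connections on $\unC^3$ satisfying properties (i)--(iv) with respect to $\hN$. The preceding proposition already supplies a candidate family, namely $\hd_\lambda = r(\lambda)\cdot\d_\lambda$, and asserts that (i)--(iv) hold with respect to $\hN = r(\infty)N$. So the only additional point to check is the flatness of $\hd_\lambda$ for every $\lambda\in\C^\times$.

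For this, I would argue in two steps. First, on the complement $\C^\times\setminus\set{\pm ia}$ of the poles of $r(\lambda)$, $\hd_\lambda$ is by construction a gauge transformation of the flat connection $\d_\lambda$ by the $\rSO(3,\C)$-valued map $r(\lambda)$, and hence is itself flat there. Second, property (i) from the proposition says that $\lambda\mapsto\hd_\lambda$ extends holomorphically across $\pm ia$, so the curvature $R^{\hd_\lambda}$ is a holomorphic $\End$-valued $2$-form in $\lambda$ that vanishes off the two points $\pm ia$; by continuity it vanishes identically. Thus $\hd_\lambda$ is flat for all $\lambda\in\C^\times$, and the earlier characterization of harmonicity delivers the conclusion.

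The only delicate ingredient in the whole chain is the one already carried out in the proof of the proposition: verifying that the apparent simple poles of $r(\lambda)$ at $\pm ia$ do not propagate into $\hd_\lambda$. This is exactly Lemma~\ref{th:2}, applied with $\alpha=ia$ and $\beta=-ia$, which turns the gauge-theoretic problem into the parallelism conditions (1) and (2) on $L$ and $L^{*}$ — the latter being automatic from $\rho^NL=\bar L$ together with the reality property (iii) of $\d_\lambda$. I expect no further obstacle: properties (ii)--(iv) for $\hd_\lambda$ are then routine consequences of the corresponding properties of $\d_\lambda$ combined with the intertwining relations \eqref{eq:6} and \eqref{eq:7} and the normalisation $r(1)=1$.
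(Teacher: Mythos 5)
Your proposal is correct and takes essentially the same route as the paper: there the theorem is precisely the conclusion of the preceding proposition (properties (i)--(iv) for $\hd_{\lambda}$, with the points $\pm ia$ handled by Lemma~\ref{th:2} and the parallelism of $L^{*}$ coming from $\rho^NL=\bar L$ and reality, just as you say) combined with the earlier characterisation of harmonicity by flat families with (i)--(iv), flatness being preserved under gauging. Your observation that the curvature extends by continuity across $\pm ia$ only makes explicit a point the paper leaves implicit in its remark that ``since these gauged connections are flat, $\hN$ will be harmonic.''
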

It is important that we have control on $\hd_{\lambda}$ since this
allows us to iterate the construction as we shall see below.

Now let us turn to the geometry of the situation.  $N$ and $\hN$ are
the Gauss maps of $K$-surfaces $f$ and $\hf$, both with $K=-1$.  We
compute $\hf$ via the Sym formula: if $T_{\lambda}$ is a trivialising
gauge for $\d_{\lambda}$ with $T_1=1$,
then $T_{\lambda}r(\lambda)^{-1}$ is a trivialising gauge for
$\hd_{\lambda}$ so that \eqref{eq:8} yields
\begin{equation*}
  \hf=f-\del r/\del\lambda|_{\lambda=1}.
\end{equation*}
The chain rule gives
\begin{equation*}
  \del r/\del\lambda|_{\lambda=1}=\frac{2ia}{1+a^2}(\Gamma^L_{L^{*}})'(1)
\end{equation*}
which, under the identification \eqref{eq:4}, is
\begin{equation*}
  \frac{2a}{1+a^2}t
\end{equation*}
for $t$ a real unit length section of $(L\oplus L^{*})^{\perp}\leq
\d f(T\Sigma)$.

We therefore conclude:
\begin{itemize}
\item $\hf-f=-2t/(a+a^{-1})$ and so is tangent to $f$ and of constant
  length.
\item Since $r(\infty)$ is rotation about $t$ through angle $\theta$
  for
  \begin{equation*}
    e^{i\theta}=\frac{1+ia}{1-ia},
  \end{equation*}
  $\hN=r(\infty)N$ is orthogonal to $t$ (so that $\hf-f$ is tangent
  to $\hf$ as well) and
  \begin{equation*}
    \hN\cdot N=\mathrm{Re} \frac{1+ia}{1-ia}=\frac{a^{-1}-a}{a^{-1}+a}.
  \end{equation*}
  Thus $\hf=f_a$, a B\"acklund transform of $f$.
\item The conformal structures of $\II$ and $\hat{\II}$ coincide
  (they are both $c$).  Otherwise said, the asymptotic lines of $f$
  and $\hf$ coincide.
\end{itemize}

\subsection{Permutability}
\label{sec:permutability}

Suppose that we have a $K$-surface and two B\"acklund transforms
$f_a$ and $f_b$ produced from $\d_{ia}$-parallel $L_{a}$ and
$\d_{ib}$-parallel $L_b$, respectively.  We now seek a fourth
$K$-surface
\begin{equation*}
  f_{ab}=(f_{a})_b=(f_b)_a.
\end{equation*}
For this we will need a $\d^a_{ib}$-parallel $\hL_b$ and a
$\d^b_{ia}$-parallel $\hL_a$.  However, since
$\d^a_{ib}=r_a(ib)\cdot\d_{ib}$ etc, we have natural candidates in
\begin{equation}
\begin{split}
  \hL_b&:=r_a(ib)L_b\\\hL_a&:=r_b(ia)L_a.
\end{split}\label{eq:9}
\end{equation}
\begin{ex}
  Check that $\hL_a,\hL_b$ so defined satisfy
  \begin{equation*}
    \rho^a\hL_b=\overline{\hL_b}\qquad
    \rho^b\hL_a=\overline{\hL_a}.
  \end{equation*}
\end{ex}
We therefore have $K$-surfaces $(f_a)_b$ and $(f_b)_a$ with
associated connections $\hat{r}_b(\lambda)\cdot\d^a_{\lambda}$ and
$\hat{r}_a(\lambda)\cdot\d^b_{\lambda}$.  The key to showing these
coincide is the following
\begin{prop}\label{th:4}
  $\hat{r}_br_a=\hat{r}_ar_b$.
\end{prop}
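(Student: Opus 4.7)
The plan is a Liouville-type argument. Set
\[
g(\lambda) := \hat r_b(\lambda)\, r_a(\lambda)\, r_b(\lambda)^{-1}\, \hat r_a(\lambda)^{-1};
\]
the claim is $g \equiv 1$. Since $\rSO(3,\C)$ embeds as a closed subvariety of $\C^9$, any holomorphic map $\P^1 \to \rSO(3,\C)$ is constant (each matrix entry is a holomorphic function on $\P^1$, hence constant). Combined with the trivial $g(1) = 1$, it therefore suffices to show that $g$ extends holomorphically to all of $\P^1$.

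Off the set $\{\pm ia, \pm ib\}$ each factor of $g$ is holomorphic and invertible, so $g$ is too. By the reality relation $\overline{g(\bar\lambda)} = g(\lambda)$ (inherited from all four factors) and the $a \leftrightarrow b$ symmetry of the construction, it is enough to verify holomorphy at $\lambda = ia$. There $r_b$, $r_b^{-1}$ and $\hat r_b$ are holomorphic; the singular factors are $r_a$ and $\hat r_a^{-1}$, each with a simple pole whose residue is rank-one---a multiple of the projection $P_{L_a^*}$ onto $L_a^*$ (with kernel $L_a \oplus (L_a \oplus L_a^*)^\perp$), and of $P_{\hat L_a}$ onto $\hat L_a$, respectively. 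The potential double-pole coefficient of $g$ is proportional to $\hat r_b(ia)\,P_{L_a^*}\,r_b(ia)^{-1}\,P_{\hat L_a}$; this vanishes since \eqref{eq:9} gives $r_b(ia)^{-1}\hat L_a = L_a \subset \ker P_{L_a^*}$.

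The main obstacle is clearing the residual simple-pole term. I would do this by lifting the identity to the connection level and applying Lemma~\ref{th:2} twice. On one hand, $L_a$ is $\d_{ia}$-parallel and $\hat L_b$ is $\d^a_{ib}$-parallel (the latter by its very construction), so $\hat r_b r_a \cdot \d_\lambda = \hat r_b \cdot \d^a_\lambda$ extends holomorphically across $\lambda = ia$ (and, by the analogous parallelism statements, across $-ia$ and $\pm ib$). On the other hand, the same conclusion for $\hat r_a r_b \cdot \d_\lambda$ requires checking that $\hat L_a = r_b(ia) L_a$ is $\d^b_{ia}$-parallel, which follows because the gauge $r_b(ia)$ carries $\d_{ia}$-parallel sections to $\d^b_{ia}$-parallel ones. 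With both families holomorphic on $\P^1$, substituting into $\hat r_b r_a = g\cdot\hat r_a r_b$ and matching Laurent coefficients at $\lambda = ia$ yields precisely the algebraic identity required to annihilate the simple-pole residue of $g$. Liouville then forces $g$ constant, and $g(1)=1$ completes the proof.
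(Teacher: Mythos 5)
Your overall frame is the same as the paper's: the paper also studies $R:=\hat{r}_b r_a r_b^{-1}\hat{r}_a^{-1}$, notes it is holomorphic on $\P^1\setminus\set{\pm ia,\pm ib}$ with $R(1)=1$, and wins by Liouville; and your double-pole cancellation at $ia$ via \eqref{eq:9} is correct. The genuine gap is in your treatment of the simple pole, which is the crux. Holomorphy at the four bad points of both families $\hat{r}_br_a\cdot\d_\lambda$ and $\hat{r}_ar_b\cdot\d_\lambda$ is true, for the reasons you give, but it is strictly weaker than holomorphy of $g$: a gauge transformation with a pole can perfectly well carry one holomorphic family of connections to another --- that is the entire point of Lemma~\ref{th:2}, where $\Gamma^L_{L^*}(\psi^{\alpha}_{\beta})$ is singular at $\alpha,\beta$ and yet the gauged family is regular there. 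If you write the two families as $\d+A^{(1)}(\lambda)$ and $\d+A^{(2)}(\lambda)$ and expand the gauge relation $\d g=gA^{(2)}-A^{(1)}g$ at $\lambda=ia$, the $(\lambda-ia)^{-1}$ coefficient says only that the residue of $g$ is a parallel section of the relevant Hom-bundle for the flat connections at $\lambda=ia$; a parallel section vanishes only if it vanishes at some point, and nothing in your data pins it down anywhere ($g(1)=1$ lives at a different value of $\lambda$). So ``matching Laurent coefficients yields precisely the algebraic identity required'' asserts the crux rather than proving it.

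What actually kills the residue is pure algebra that you nearly had in hand: orthogonality supplies the missing identity, with no recourse to connections at all. In a coordinate $s=\psi^{ia}_{-ia}(\lambda)$ vanishing at $ia$, write $E:=r_b^{-1}=E_0+sE_1+O(s^2)$, and let $P_0,\hat{P}_0$ denote the projections onto $(L_a\oplus L_a^{*})^{\perp}$ and $(\hat{L}_a\oplus\hat{L}_a^{*})^{\perp}$. The coefficient of $s^{-1}$ in $r_a\,r_b^{-1}\,\hat{r}_a^{-1}$ is $P_0E_0P_{\hat{L}_a}+P_{L_a^{*}}E_1P_{\hat{L}_a}+P_{L_a^{*}}E_0\hat{P}_0$; your condition $E_0\hat{L}_a=L_a$ (which killed the double pole) also kills the first term, leaving
\begin{equation*}
  P_{L_a^{*}}E_1P_{\hat{L}_a}+P_{L_a^{*}}E_0\hat{P}_0.
\end{equation*}
Since $E$ is $\rSO(3,\C)$-valued, $E_0^{T}E_1+E_1^{T}E_0=0$ gives $(E_1\hat{e},E_0\hat{e})=0$ for $\hat{e}$ spanning $\hat{L}_a$, so $E_1\hat{L}_a\perp L_a$ and the first term dies; and $E_0$ orthogonal carries $(\hat{L}_a\oplus\hat{L}_a^{*})^{\perp}$ into $(E_0\hat{L}_a)^{\perp}=L_a^{\perp}=\ker P_{L_a^{*}}$, so the second dies too. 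This computation is exactly the paper's Lemma~\ref{th:3}, the multiplicative (``discrete'') counterpart of Lemma~\ref{th:2}: its single hypothesis $E(\alpha)\ell^+=\hat{\ell}^+$ --- here \eqref{eq:9} --- removes both the double \emph{and} the simple pole. The paper then simply applies that lemma to the groupings $\hat{r}_br_ar_b^{-1}$ (holomorphic at $\pm ib$) and $r_ar_b^{-1}\hat{r}_a^{-1}$ (holomorphic at $\pm ia$), with the points $-ia,-ib$ covered by the reality $\overline{r(\lambda)}=r(\bar{\lambda})$, and concludes by the Liouville argument you set up. Replacing your connection-level detour by this algebraic lemma closes the gap.
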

For this we need a lemma which is a discrete version of
Lemma~\ref{th:2}:
\begin{lem}\label{th:3}
  Let $\ell^{\pm},\hat{\ell}^{\pm}$ be two pairs of distinct null
  lines, $\psi^{\alpha}_{\beta}$ a linear fraction transformation
  with a zero at $\alpha$ and a pole at $\beta$ and $\lambda\mapsto
  E(\lambda)$ holomorphic near $\alpha$ and $\beta$.  Then
  \begin{equation*}
    \lambda\mapsto
    \Gamma^{\hat{\ell}^+}_{\hat{\ell}_-}(\psi^{\alpha}_{\beta}(\lambda))E(\lambda)
    (\Gamma^{\ell^+}_{\ell^-}(\psi^{\alpha}_{\beta}(\lambda)))^{-1}
  \end{equation*}
  is holomorphic at $\alpha$ if and only if
  $E(\alpha)\ell^+=\hat{\ell}^+$ and holomorphic at $\beta$ if and
  only if $E(\beta)\ell^-=\hat{\ell}^-$.
\end{lem}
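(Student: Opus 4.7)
The plan is a direct block-matrix analysis. Set $z=\psi^\alpha_\beta(\lambda)$, so that $z$ has a simple zero at $\alpha$ and a simple pole at $\beta$. Choose bases $e_1,e_2,e_3$ of $\C^3$ adapted to the decomposition $\C^3=\ell^+\oplus V_0\oplus\ell^-$, where $V_0=(\ell^+\oplus\ell^-)^\perp$ is the one-dimensional orthogonal complement, and similarly $\hat e_1,\hat e_2,\hat e_3$ adapted to the hatted decomposition. In these bases $\Gamma^{\ell^+}_{\ell^-}(z)=\mathrm{diag}(z,1,z^{-1})$, and likewise for the hatted version.

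Writing $E=(E_{ij})$, the conjugate $\Gamma^{\hat\ell^+}_{\hat\ell^-}(z)\,E(\lambda)\,\Gamma^{\ell^+}_{\ell^-}(z)^{-1}$ has $(i,j)$-entry $z^{d_i-d_j}E_{ij}(\lambda)$ with $(d_1,d_2,d_3)=(1,0,-1)$. As $\lambda\to\alpha$ only the subdiagonal entries can be singular: the potential poles at positions $(2,1),(3,1),(3,2)$ have orders at most $1,2,1$ respectively. Hence holomorphicity at $\alpha$ is equivalent to $E_{21}$ and $E_{32}$ vanishing at $\alpha$ together with $E_{31}$ vanishing to order $2$ at $\alpha$. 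The situation at $\beta$ is symmetric, controlling the superdiagonal entries $E_{12},E_{13},E_{23}$.

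Now I would translate these analytic conditions into the stated geometric ones. Since $E_{21}(\alpha)=E_{31}(\alpha)=0$ already says $E(\alpha)e_1\in\hat\ell^+$, and $E(\alpha)$ is invertible, the implication holomorphic $\Rightarrow$ $E(\alpha)\ell^+=\hat\ell^+$ is immediate. For the converse, I would use that $E$ takes values in $\rSO(3,\C)$ (as in all our applications), so $E(\lambda)e_1$ is null for all $\lambda$, yielding the identity
\[
2E_{11}(\lambda)E_{31}(\lambda)+E_{21}(\lambda)^2=0.
\]
Given $E_{21}(\alpha)=E_{31}(\alpha)=0$ and $E_{11}(\alpha)\ne0$, this forces $E_{31}=O(E_{21}^2)$ near $\alpha$, so $E_{31}$ vanishes to at least order $2$ there. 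Similarly, the orthogonality of $E(\lambda)e_1$ and $E(\lambda)e_2$ yields at $\alpha$ the relation $E_{11}(\alpha)E_{32}(\alpha)=0$, hence $E_{32}(\alpha)=0$. The analogous argument at $\beta$, using nullity of $E(\lambda)e_3$ and orthogonality of $E(\lambda)e_3$ with $E(\lambda)e_2$, handles the superdiagonal entries.

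The main obstacle I anticipate is just the bookkeeping of pole orders—specifically, recognising that $E_{31}$ must vanish to \emph{second} order at $\alpha$ rather than merely first, and that this extra vanishing follows for free from nullity rather than requiring any further geometric input. Everything else is routine linear algebra.
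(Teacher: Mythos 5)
Your proof is correct, and there is in fact nothing in the paper to compare it against: Lemma~\ref{th:3} is stated bare and immediately used in the proof of Proposition~\ref{th:4}, so your argument fills a gap the paper leaves to the reader. The bookkeeping is right: in bases adapted to $\C^3=\ell^+\oplus V_0\oplus\ell^-$ and its hatted counterpart, the conjugated map has $(i,j)$-entry $z^{d_i-d_j}E_{ij}(\lambda)$ with $(d_1,d_2,d_3)=(1,0,-1)$, and since a M\"obius transformation has only simple zeros and poles, holomorphicity at $\alpha$ is exactly $\mathrm{ord}_\alpha E_{21}\geq 1$, $\mathrm{ord}_\alpha E_{32}\geq 1$ and $\mathrm{ord}_\alpha E_{31}\geq 2$. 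Your key observation---that the second-order vanishing of $E_{31}$ and the vanishing of $E_{32}$ come for free from $E$ being $\rO(3,\C)$-valued, via the nullity identity $2c\,E_{11}E_{31}+d\,E_{21}^2=0$ and the orthogonality relation $c\,(E_{11}E_{32}+E_{31}E_{12})+d\,E_{21}E_{22}=0$, where $c=(\hat e_1,\hat e_3)\neq0$ and $d=(\hat e_2,\hat e_2)\neq0$ are the Gram constants you implicitly normalised to $1$---is precisely where the content lies. It is also where you correctly diagnose that the lemma as literally stated is too generous: for a merely holomorphic (even holomorphic invertible) $E$, taking $E_{31}(\lambda)=\lambda-\alpha$ with the other subdiagonal entries vanishing identically satisfies $E(\alpha)\ell^+=\hat{\ell}^+$ yet leaves a simple pole in the $(3,1)$ slot, so the ``if'' direction genuinely needs $E$ orthogonal. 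In the paper's application $E$ is built from the $\rSO(3,\C)$-valued maps $r_a,r_b,\hat{r}_a,\hat{r}_b$, so your added hypothesis is the intended one; I would only ask you to state it explicitly in the lemma (it also supplies the invertibility of $E(\alpha)$ that your ``only if'' direction quietly uses), and to record the small fact that two distinct null lines in $\C^3$ automatically pair nondegenerately (a totally isotropic $2$-plane would exceed the Witt index of the form), which legitimises the adapted decomposition. The symmetric treatment at $\beta$, where $z$ has a simple pole and the superdiagonal entries $E_{12},E_{13},E_{23}$ are controlled by nullity of $Ee_3$ and its orthogonality to $Ee_2$, is exactly as you say.
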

\begin{proof}[Proof of Proposition~\ref{th:4}]
  We show that $R:=\hat{r}_br_{a}r_{b}^{-1}\hat{r}^{-1}_a$ is identically
  $1$.  Note $R$ is holomorphic on $\P^1\setminus\set{\pm ia,\pm ib}$
  and $R(1)=1$.  Now Lemma~\ref{th:3} together with \eqref{eq:9}
  shows that $\hat{r}_br_{a}r_{b}^{-1}$ is holomorphic at $\pm
  ib$ and that $r_{a}r_{b}^{-1}\hat{r}^{-1}_a$ is holomorphic at $\pm
  ia$ so that $R$ is holomorphic on $\P^1$ and so is constant.
\end{proof}

In particular,
\begin{equation*}
  (f_a)_b=f-\del(\hat{r}_br_a)/\del\lambda|_{\lambda=1}=
  f-\del(\hat{r}_ar_b)/\del\lambda|_{\lambda=1}=(f_{b})_a
\end{equation*}
and we have established Bianchi permutability.

\section{Isothermic surfaces}
\label{sec:isothermic-surfaces}

\subsection{Classical theory}
\label{sec:classical-theory}

First studied by Bour \cite{Bou62} in 1862, a surface
$f:\Sigma\to\R^3$
is \emph{isothermic} if it admits conformal curvature line
coordinates $x,y$ so that
\begin{align*}
  \I&:=e^{2u}(\d x^2+\d y^2)\\
  \II&=e^{2u}(\kappa_1\d x^2+\kappa_2\d y^2).
\end{align*}
A more invariant formulation is that there should exist a non-zero
holomorphic quadratic differential $q$ on $\Sigma$ such that
\begin{equation*}
  [q,\II]=0,
\end{equation*}
or, more explicitly, $[S,Q]=0$ where $Q$ is the symmetric
endomorphism with $q=I(Q\,,\,)$.  The relationship between the two
formulations is given by setting $z=x+iy$ and then $q=\d z^2$.

\begin{xmpls}
\item[]
  \begin{itemize}
  \item cones, cylinders and surfaces of revolution are isothermic:
    for the last, parametrise the profile curve in the upper half
    plane by hyperbolic arc length to get conformal curvature line
    coordinates.

    In particular, we see that isothermic surfaces have no regularity.
  \item (Stereo-images of) surfaces of constant $H$ in
    $3$-dimensional space-forms.  Here we take $q$ to be the Hopf
    differential $\II^{2,0}$.
  \item quadrics. Sadly, I know no short or conceptual argument for
    this. 
  \end{itemize}
\end{xmpls}

Isothermic surfaces have many symmetries:
\begin{enumerate}
\item Conformal invariance: if
  $\Phi:\R^3\cup\set\infty\to\R^3\cup\set\infty$ is a conformal
  diffeomorphism and $f$ is isothermic, then $\Phi\circ f$ is
  isothermic too.  This is because, while $\II$ is certainly not
  conformally invariant, its trace-free part $\II_{0}$ is and
  $[q,\II]=[q,\II_0]$.
\item In 1867, Christoffel \cite{Chr67} showed that $f$ is isothermic if and only
  if there is (locally) a \emph{dual surface} $f^c:\Sigma\to\R^3$ such
  that
  \begin{itemize}
  \item The metrics $\I$ and $\I^c$ are in the same conformal class.
  \item $f$ and $f^c$ have parallel tangent planes: $\d f(T\Sigma)=\d
    f^c(T\Sigma)$.
  \item $\det(\d f^{-1}\circ\d f^c)<0$.
  \end{itemize}
  Of course, the symmetry of the conditions means that $f^c$ is
  isothermic also and that $(f^c)^c=f$.
  \begin{xmpls}
  \item[]
    \begin{itemize}[---]
    \item When $f$ has constant mean curvature $H\neq 0$, $f^c=f+N/H$
      which has the same constant mean curvature.
    \item When $f$ is minimal, $f^c=N$, the Gauss map.

      Conversely, any conformal map $N:\Sigma\to S^2$ is isothermic
      with respect to any holomorphic quadratic differential $q$.
      Fixing such a $q$, we obtain a minimal surface
      $N^c:\Sigma\to\R^3$: this is the celebrated
      Weierstrass--Enneper formula!
    \end{itemize}
  \end{xmpls}
\item After Darboux \cite{Dar99e}, we seek a surface
  $\hf:\Sigma\to\R^3\cup\set\infty=S^3$ such that
  \begin{itemize}
  \item $f$ and $\hf$ induce the same conformal structure on $\Sigma$.
  \item $f$ and $\hf$ have the same curvature lines.
  \item For each $p\in\Sigma$, there is a $2$-sphere $S(p)\subset
    S^3$ to which both $f$ and $\hf$ are tangent at $p$.
  \end{itemize}
  In classical terminology, $f$ and $\hf$ are the enveloping surfaces
  of a \emph{conformal Ribaucour sphere congruence}.

  Here are the facts:
  \begin{enumerate}
  \item $\hf$ exists if and only if $f$ is isothermic so that, by
    symmetry, $\hf$ is isothermic also.
  \item For $a\in\R^{\times}$ and initial point $y_0\in
    S^3\setminus\set{f(p_0)}$, we can find a unique such $\hf$ with
    $\hf(p_0)=y_0$ by solving a completely integrable $5\times5$
    system of linear differential equations with a quadratic
    constraint (thus, to anticipate, finding a parallel section of a
    metric connection!).

    We write $\hf=f_a$ and call it a \emph{Darboux transformation of
      $f$ with parameter $a$}.
  \item Permutability (Bianchi \cite{Bia05}): Given
    isothermic $f$
    and two Darboux transforms $f_a$
    and $f_b$
    with $a\neq b$,
    there is a fourth isothermic surface
    $f_{ab}=(f_a)_b=(f_b)_a$
    which is algebraically determined by $f,f_a,f_b$.
    Indeed, Demoulin \cite{Dem10} shows that
    $f,f_a,f_b,f_{ab}$
    are pointwise concircular with constant cross-ratio
    $(f_b,f_a;f,f_{ab})=a/b$!
    We call a quadruple of surfaces related in this way a
    \emph{Bianchi quadrilateral}.

    One can iterate this procedure to construct a quad-graph of
    isothermic surfaces.  At each point, the corresponding quad-graph
    of points in $S^3$ with concircular elementary quadrilaterals of
    prescribed cross-ratio gives a discrete isothermic surface in the
    sense of Bobenko--Pinkall \cite{BobPin96} as we shall see in
    section~\ref{sec:discr-isoth-surf}.
    
    Moreover, if we now add a third surface $f_c$, we can apply this
    result to obtain $f_{ab},f_{ac},f_{bc}$ and then obtain an eighth
    surface $f_{abc}$ such that
    \begin{gather*}
      f_a,f_{ab},f_{ac},f_{abc}\\
      f_b,f_{ab},f_{bc},f_{abc}\\
      f_c,f_{ac},f_{bc},f_{abc}
    \end{gather*}
    are \emph{all} Bianchi quadrilaterals.
    
    This ``cube theorem'', also due to Bianchi (and also available
    for B\"acklund transformations of $K$-surfaces), can be viewed as the
    construction of a Darboux transform for discrete isothermic
    surfaces, see section~\ref{sec:gauge-theory-discr}.
  \end{enumerate}
\item Spectral deformation (Calapso 1903 \cite{Cal03},
  Bianchi 1905 \cite{Bia05a}): Given $f$
  isothermic, there is a $1$-parameter family $f_t$ of isothermic
  surfaces with $f=f_0$ inducing the same conformal structure on
  $\Sigma$ and having the same $\II_0$. The $f_t$ are called
  \emph{$T$-transforms} of $f$.
  
  Aside: We know that $\I$ and $\II$ determine a surface in $\R^3$ up
  to rigid motions.  It is therefore natural to ask if the conformal
  invariants $\<\I\>$ and $\II_0$ determine a surface in $S^3$ up to
  conformal diffeomorphism.  The $T$-transforms of an isothermic
  surface show that the answer is no but, according to
  Cartan\footnote{See \cite{BurPedPin02,BurCal10} for modern
  treatments.}, are
  the only such witnesses: if $f$ is not isothermic it is determined
  up to conformal diffeomorphism by $\<\I\>$ and $\II$.
\end{enumerate}

In this story, we recognise some familiar features: solutions in
$1$-parameter families and new solutions from commuting ODE.  We will
see how our gauge theoretic formalism applies in this situation.

\subsection{Conformal geometry (rapid introduction)}
\label{sec:conf-geom-rapid}

The conformal invariance of isothermic surfaces suggests
that we should work on the conformal compactification
$\R^3\cup\set\infty=S^3$
of $\R^3$.
For this, Darboux \cite{Dar87} offers a convenient model
which essentially linearises the situation.

Let $\R^{4,1}$ be a $5$-dimensional Minkowski space with a
metric $(\,,\,)$ of signature $++++-$ and let $\cL\subset\R^{4,1}$ be
the light-cone:
\begin{equation*}
  \cL=\set{v\in\R^{4,1}\colon (v,v)=0}.
\end{equation*}
The collection of lines through zero in $\cL$ is the projective
light-cone $\P(\cL)$ which is a smooth quadric in $\P(\R^{4,1})$
diffeomorphic to $S^3$.

$\P(\cL)$ has a conformal structure: any section
$\sigma:\P(\cL)\to\cL^{\times}$ of the projection
$\cL^{\times}\to\P(\cL)$ gives rises to a positive definite metric
\begin{equation*}
  g_{\sigma}(X,Y):=(\d_X\sigma,\d_Y\sigma)
\end{equation*}
and it is easy to see that $g_{e^u\sigma}=e^{2u}g_{\sigma}$.  Conic
sections give constant curvature metrics: more explicitly, let
$t_0\in\R^{4,1}$ have $(t_0,t_0)=-1$ and write
$\R^{4,1}=\R^4\oplus\<t_0\>$.  Then the map $x\mapsto x+t_0:S^3\to\cL$
from the unit sphere of $\R^{4}$ induces a conformal diffeomorphism
onto $\P(\cL)$.  Thus $\P(\cL)\cong S^3$ as conformal manifolds.

$k$-spheres in $S^3$ are linear objects in this picture: they are the
subsets $\P(W\cap\cL)\subset\P(\cL)$ where $W\leq\R^{4,1}$ is a
linear subspace of signature $(k+1,1)$.  For example, we obtain the
circle through three distinct points in $\P(\cL)$ by taking $W$ to be
the $(2,1)$-plane they span.
\begin{ex}
  Let $W$ be a $(3,1)$-plane.  Show that reflection across $W$
  induces the inversion of $\P(\cL)=S^3$ in the corresponding $2$-sphere.
\end{ex}
More generally, the subgroup $\rO_+(4,1)$ of the orthogonal group
that preserves the components of the light cone acts effectively by conformal
diffeomorphisms on $\P(\cL)$ and, by the exercise, has all inversions
in its image.  It follows from a theorem of Liouville that
$\rO_+(4,1)$ is the conformal diffeomorphism group of $S^3$.

In the light of all this, we henceforth treat maps
$f:\Sigma\to\P(\cL)$ and identify such maps with null line subbundles
$f\leq\underline{\R}^{4,1}$ via $f(x)=f_x$.

\subsection{Isothermic surfaces reformulated}
\label{sec:back-isoth-surf}

We give a third and final reformulation of the isothermic condition
by exploiting the structure of $S^3$ as a homogeneous space for
$G:=\rO_+(4,1)$.

Since $G$ acts transitively, we have, for $x\in S^{3}$ an isomorphism
\begin{equation*}
  T_xS^3\cong\g/\p_x,
\end{equation*}
where $\p_x$ is the infinitesimal stabiliser of $x$.  The key
algebraic ingredient in what follows is that $\p_x$ is a
\emph{parabolic} subalgebra with abelian nilradical: this means that
the polar $\p_x^{\perp}$ with respect to the Killing form is an
$\ad$-nilpotent abelian subalgebra (in fact, it is the algebra of
infinitesimal translations on the $\R^3$ obtained by stereoprojecting
away from $x$).
\begin{rem}
  We identify $\g=\fso(4,1)$ with $\Wedge^2\R^{4,1}$ via
  \begin{equation*}
    (u\wedge v)w=(u,w)v-(v,w)u
  \end{equation*}
  and then $\p_x^{\perp}=x\wedge x^{\perp}$.
\end{rem}

Now $T^{*}_xS^3\cong(\g/\p_x)^{*}$ which is isomorphic to
$\p_x^{\perp}\leq\g$ via the Killing form.  We have therefore
identified $T^{*}S^3$ with a bundle of abelian subalgebras of $\g$.

With this in hand, let $q$ be a symmetric $(2,0)$-form on $\Sigma$
and $f:\Sigma\to S^{3}$ an immersion.  Then $q+\bar{q}$ is a section
of $S^2T^{*}\Sigma$ and so may be viewed as a $1$-form with values in
$T^{*}\Sigma$.  Moreover, $\d f$ and the conformal structure of
$S^{3}$ allow us to view $T^{*}\Sigma$ as a subbundle of
$f^{-1}T^{*}S^{3}$ and so as a subbundle of $\underline{\g}$.
Chaining all this together, we see that $q+\bar{q}$ gives rise to a
$\g$-valued $1$-form $\eta$ taking values in the bundle of abelian
subalgebras $\p_f^{\perp}=f\wedge f^{\perp}$.

The crucial fact is now:
\begin{prop}
  $q$ is a holomorphic quadratic differential with $[q,\II_0]=0$ if
  and only if $\d\eta=0$.
\end{prop}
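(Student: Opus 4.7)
The plan is to pick a local conformal coordinate $z = x + iy$ adapted to $c$, choose a lift $\sigma : \Sigma \to \cL^{\times}$ of $f$, and compute $\d\eta$ explicitly. Writing $q = Q\,\d z^{2}$ turns holomorphicity of $q$ into $Q_{\bar z} = 0$, while the conformal relations $(\sigma_z,\sigma_z) = (\sigma_{\bar z},\sigma_{\bar z}) = 0$ and $(\sigma_z, \sigma_{\bar z}) = \half e^{2u}$ together with their derivatives give, for a unit conformal normal $n$,
\[
\sigma_{zz} \equiv 2 u_z\, \sigma_z + h\, n \pmod{\<\sigma\>},
\]
with $h = (\sigma_{zz}, n)$ the $(2,0)$-coefficient of $\II_{0}$ (and conjugately for $\sigma_{\bar z\bar z}$).

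Tracing through the identification $T^{*}\Sigma \hookrightarrow T^{*}S^{3}|_{f} \cong f \wedge f^{\perp}$, one finds that $\d z$ corresponds to $2 e^{-2u}\, \sigma\wedge\sigma_{\bar z}$, so that
\[
\eta = 2 e^{-2u}\, \sigma \wedge \bigl(Q\, \sigma_{\bar z}\, \d z + \bar Q\, \sigma_z\, \d \bar z \bigr).
\]
Computing $\d\eta(\partial_z, \partial_{\bar z})$ by Leibniz, the $\d\sigma \wedge (\cdots)$ contributions vanish (they land in $\sigma_z\wedge\sigma_z$ or $\sigma_{\bar z}\wedge\sigma_{\bar z}$), while expanding $\partial_z(\bar Q \sigma_z)$ and $\partial_{\bar z}(Q\sigma_{\bar z})$ and inserting the formula for $\sigma_{zz}$, the $u_z\bar Q$ term from differentiating $e^{-2u}$ cancels exactly against the $2 u_z\sigma_z$ piece of $\sigma_{zz}$ (and similarly for the conjugate), leaving
\[
\d\eta(\partial_z,\partial_{\bar z}) = 2 e^{-2u}\, \sigma \wedge \bigl[\bar Q_z\, \sigma_z - Q_{\bar z}\, \sigma_{\bar z} + (\bar Q h - Q\bar h)\, n\bigr].
\]
Since $\sigma\wedge\sigma_z$, $\sigma\wedge\sigma_{\bar z}$, $\sigma\wedge n$ are linearly independent in $\Wedge^{2}\underline{\R}^{4,1}$, this vanishes iff $Q_{\bar z}=0$ (i.e.\ $q$ is holomorphic) and $\bar Q h = Q\bar h$. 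A direct $2\times 2$ matrix computation in $(x, y)$-coordinates identifies the latter as $[q, \II_0] = 0$.

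The main obstacle is the initial bookkeeping, namely fixing the embedding $T^{*}\Sigma \hookrightarrow f \wedge f^{\perp}$ in terms of $\sigma$ and the conformal factor; once $\eta$ is in hand the computation is routine, and the crucial step is the cancellation between the derivative of $e^{-2u}$ and the tangential $2u_z\sigma_z$ component of $\sigma_{zz}$, which is what makes the Cauchy--Riemann equation for $Q$ appear in its pure (non-covariantised) form.
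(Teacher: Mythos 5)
Your argument is correct and complete; note that the paper states this proposition without proof (it is ``the crucial fact'' left to the reader), so your frame computation supplies a verification rather than an alternative to anything in the text. All the key steps check out. The embedding $T^{*}\Sigma\hookrightarrow f\wedge f^{\perp}$ does send $\d z$ to a multiple of $e^{-2u}\,\sigma\wedge\sigma_{\bar z}$: the overall constant depends on the normalisation of the trace form used to identify $(\g/\p_f)^{*}$ with $\p_f^{\perp}$ and is immaterial for the vanishing statement, though it is worth observing (your formula tacitly uses this) that $e^{-2u}\,\sigma\wedge\sigma_{\bar z}$ is unchanged under rescaling the lift $\sigma\mapsto e^{w}\sigma$, so your expression for $\eta$ is well defined. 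Differentiating the conformality relations indeed gives $\sigma_{zz}\equiv 2u_{z}\sigma_{z}+h\,n\pmod{\<\sigma\>}$ (the undetermined $\sigma$-component is killed by wedging with $\sigma$), the mixed derivative $\sigma_{z\bar z}$ never appears since $A\,\d z$ is differentiated only in $\bar z$ and $B\,\d\bar z$ only in $z$, and the advertised cancellation of the $u_{z}$-terms against the derivative of $e^{-2u}$ is genuine, yielding exactly
\begin{equation*}
  \d\eta(\del_{z},\del_{\bar z})
  =2e^{-2u}\,\sigma\wedge\bigl[\bar Q_{z}\,\sigma_{z}-Q_{\bar z}\,\sigma_{\bar z}
  +(\bar Q h-Q\bar h)\,n\bigr],
\end{equation*}
whose three coefficients vanish independently by the linear independence of $\sigma\wedge\sigma_{z}$, $\sigma\wedge\sigma_{\bar z}$, $\sigma\wedge n$. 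Two small tidying remarks: since $\bar Q_{z}=\overline{Q_{\bar z}}$, your first two conditions are a single real condition, namely the holomorphicity of $q$; and the last identification needs no matrix computation, since the endomorphisms attached to $q+\bar q$ and $\II_{0}$ by the metric $e^{2u}\,|\d z|^{2}$ act by $\del_{z}\mapsto 2e^{-2u}Q\,\del_{\bar z}$, $\del_{\bar z}\mapsto 2e^{-2u}\bar Q\,\del_{z}$ and likewise with $h$, so their commutator acts on $\del_{z}$ by multiplication by $4e^{-4u}(Q\bar h-\bar Qh)$, whence $[q,\II_{0}]=0$ if and only if $Q\bar h=\bar Qh$. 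Finally, since $h=(\sigma_{zz},n)$ rescales by the positive factor $e^{w}$ under change of lift, both conditions are independent of all choices, as they must be.
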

The converse is also true and we arrive at our final formulation of
the isothermic condition:
\begin{thm}
  $f$ is isothermic if and only if there is a non-zero
  $\eta\in\Omega^1(\g)$ with
  \begin{compactenum}
  \item $\d\eta=0$.
  \item $\eta$ takes values in $f\wedge f^{\perp}$.
  \end{compactenum}
\end{thm}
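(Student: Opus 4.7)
The plan is to combine the preceding Proposition with the invariant classical characterization of isothermic surfaces recalled earlier, namely that $f$ is isothermic if and only if there is a non-zero holomorphic quadratic differential $q$ on $\Sigma$ with $[q,\II]=0$. A preliminary observation is that $[q,\II]=[q,\II_0]$: writing $\II=\II_0+H\I$ and identifying forms with endomorphisms via $\I$, the trace part $H\I$ acts as a scalar on $T\Sigma$ and so drops out of the commutator with the trace-free $Q$.

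For the forward direction, given $f$ isothermic, the classical formulation supplies such a $q$; the explicit construction $q+\bar q \mapsto \eta$ described just before the Proposition produces a $\g$-valued 1-form taking values in $f\wedge f^\perp$, and the Proposition immediately yields $\d\eta=0$. Non-vanishing of $\eta$ follows from the injectivity of that construction, as both intermediate identifications $T^*\Sigma \cong T\Sigma \hookrightarrow f^\perp/f$ (via the conformal structure and $\d f$) and $f\wedge f^\perp \cong T^*_f S^3$ (via the Killing form and the abelian nilradical structure of $\p_f$) are fibrewise injective.

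For the converse, given a non-zero closed $\eta$ with values in $f\wedge f^\perp$, I would reverse the construction. Using the above identifications, the tangential part of $\eta$, meaning its projection to $f\wedge \d f(T\Sigma)$, corresponds to a symmetric $(0,2)$-tensor on $\Sigma$, which splits uniquely as $q+\bar q$ with $q\in\Gamma(K^{\otimes 2})$ via the conformal structure. The Proposition, applied in its converse direction, then translates $\d\eta=0$ into holomorphicity of $q$ together with $[q,\II_0]=0$; the classical formulation then concludes that $f$ is isothermic.

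The main obstacle is in making this last direction precise. The bundle $f\wedge f^\perp$ is $3$-dimensional at each point, whereas the image of $q+\bar q\mapsto \eta$ lies in the $2$-dimensional tangential subbundle $f\wedge \d f(T\Sigma)$; an arbitrary closed $\eta$ may carry a non-trivial ``normal'' component valued in $f\wedge N$, where $N\subset f^\perp/f$ is the normal line. One must therefore verify either that this normal component is forced to vanish (or is exact, hence harmless) by $\d\eta=0$ together with the algebraic constraints, or that the tangential part of $\eta$ is itself already closed and non-zero, so that the Proposition applies to produce a genuine $q$. Untangling the identifications carefully enough to pin this down is where the substantive work lies.
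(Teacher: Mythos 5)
Your overall route is the paper's own: the paper derives the theorem from the classical characterisation ($f$ isothermic iff there is a non-zero holomorphic $q$ with $[q,\II]=[q,\II_0]=0$) together with the Proposition, and disposes of the reverse direction with the single sentence ``The converse is also true''. So your forward direction coincides with the paper's, and you have correctly located where the unproven content lies. However, as written your proposal has a genuine gap --- and the problem is in fact slightly larger than the one you flag. Besides the possible normal component valued in $f\wedge N$, the tangential part of an arbitrary closed $\eta$ corresponds, under your identifications, to a section $b$ of $T^{*}\Sigma\otimes T^{*}\Sigma$ with \emph{no} a priori symmetry; moreover only symmetric \emph{trace-free} $2$-tensors split as $q+\bar{q}$. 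So symmetry and trace-freeness of $b$ must also be extracted from $\d\eta=0$ before the Proposition can be applied in reverse, and your sketch silently assumes both.

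The good news is that all three constraints are pointwise linear algebra, so no exactness or integration issues arise (this settles your dichotomy: the normal component vanishes identically, it is not merely exact). Fix a local lift $\sigma$ of $f$ and write $\eta=\sigma\wedge\omega$ with $\omega\in\Omega^{1}(\sigma^{\perp})$, well defined modulo $\sigma$; set $b(X,Y)=(\d_X\sigma,\omega(Y))$ and decompose $\Wedge^{2}\R^{4,1}$ using $\sigma$, the tangent plane $T$, the normal line $\<\nu\>$ and a complementary null line $\<\hat{\sigma}\>$. Since $\d\sigma$ and $\omega$ both take values in $\sigma^{\perp}$, the components of $\d\eta$ not containing $\sigma$ come only from $\d\sigma\wedge\omega$: the $T\wedge\<\nu\>$-component is $(X,Y)\mapsto(\beta(Y)\d_X\sigma-\beta(X)\d_Y\sigma)\wedge\nu$, where $\omega^{\perp}=\beta\nu$, and its vanishing forces $\beta=0$ because $f$ immerses; the $\Wedge^{2}T$-component is (in an orthonormal tangent frame) $\trace b$ times the area element, killing the trace. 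Finally, since neither $\d\sigma$ nor $\omega$ has a $\hat{\sigma}$-component, the $\sigma\wedge\hat{\sigma}$-component of $\d\eta$ comes only from $\sigma\wedge\d\omega$ and, by $(\d_X(\omega(Y)),\sigma)=-b(X,Y)$, equals up to scale the skew part of $b$, killing it. Thus any closed $\eta$ with values in $f\wedge f^{\perp}$ is of the form produced by the construction from some $q+\bar{q}$, and the remaining components of $\d\eta=0$ (those in $\sigma\wedge T$ and $\sigma\wedge\<\nu\>$) are precisely the Proposition's conditions that $q$ be holomorphic with $[q,\II_0]=0$; the classical formulation then gives $f$ isothermic. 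With this lemma supplied, your argument closes and is, in substance, the proof the paper leaves unwritten.
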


\subsection{Flat connections}
\label{sec:flat-connections-1}

For $f$ an isothermic surface with closed form $\eta$, we define, for
each $t\in\R$, a metric connection $\d_t=\d+t\eta$ on
$\underline{\R}^{4,1}$ and note that,
\begin{equation*}
  R^{\d_t}=R^{\d}+\d\eta+\half[\eta\wedge\eta]=0
\end{equation*}
since each summand vanishes separately: $[\eta\wedge\eta]=0$ since
$\eta$ takes values in the abelian subalgebra $f\wedge f^{\perp}$.

Thus we once again have a family of flat connections which we now exploit.

\subsection{Spectral deformation}
\label{sec:spectral-deformation-1}

Since each $\d_t$ is flat, we may locally find a trivialising gauge
$T_t:\Sigma\to\rSO(4,1)$ with $T_t\cdot\d_t=\d$.

For $s\in\R^{\times}$, we have $\d_{s+t}=\d_s+t\eta$ so that
\begin{equation*}
  \d^s_t:=T_s\cdot\d_{s+t}=\d+\Ad_{T_s}\eta
\end{equation*}
is flat for all $t$.  Set $f^s=T_sf$ and $\eta_s:=\Ad_{T_s}\eta$
which takes values in the bundle of abelian subalgebras $f_s\wedge
f_s^{\perp}$ so that $[\eta_s\wedge\eta_s]=0$.  The flatness of
$\d^s_t$ now tells us that $\d\eta_s=0$ so that $f_s$ is isothermic.

In fact, the $f_{s}$, $s\in\R^{\times}$, are the $T$-transforms of
Bianchi and Calapso.

\subsection{Parallel sections and Darboux transforms}
\label{sec:parallel-sections}

Here the analysis is much easier than for $K$-surfaces: there we
needed a slightly elaborate construction to arrive at a new surface
from parallel line-bundles.  For isothermic surfaces, the new
surfaces \emph{are} the parallel line bundles!

Indeed, for $f$ isothermic with connections $\d_t$ and
$a\in\R^{\times}$, choose a null line subbbundle
$\hf\leq\underline{\R}^{4,1}$ such that
\begin{enumerate}
  \item $\hf$ is $d_a$-parallel.
  \item $f\cap\hf=\set0$ (this condition may eventually fail far from
    the initial condition and this will introduce singularities into
    our transform).
\end{enumerate}
Then:
\begin{itemize}
\item $\hf$ is isothermic.
\item $\hf$ is a Darboux transform of $f$ with parameter $a$.
\item $\hd_t=\Gamma_f^{\hf}(1-t/a)\cdot\d_t$.
\end{itemize}

\subsection{Permutability}
\label{sec:permutability-1}

Suppose now that we have isothermic $f$ and two Darboux transforms
$f_a$ and $f_b$ with connections $\d^a_t$ and $\d^b_t$.  We seek a
fourth isothermic surface $f_{ab}$ which is a simultaneous Darboux
transform of $f_a$ and $f_b$:
\begin{equation*}
  f_{ab}=(f_a)_{b}=(f_b)_a.
\end{equation*}
Thus we need $(f_a)_b$ to be $\d^a_b$-parallel and $(f_{b})_a$ to be
$\d^b_a$-parallel.  The obvious candidates are:
\begin{align*}
  (f_a)_b&=\Gamma_f^{f_a}(1-b/a)f_b\\
  (f_b)_a&=\Gamma_f^{f_b}(1-a/b)f_a.
\end{align*}
We shall give two arguments that these coincide.

First note that, for $x,y,z\in\P(\cL)$,
\begin{equation*}
  t\mapsto\Gamma^x_y(t)z
\end{equation*}
is a rational parametrisation of the circle through $x,y,z$ by
$\R\P^1=\R\cup\set\infty$ with
\begin{align*}
  \infty&\mapsto x\\
  0&\mapsto y\\
  1&\mapsto z
\end{align*}
so that $\Gamma^x_y(t)z$ has cross-ratio $t$ with $x,y,z$:
\begin{equation*}
  (x,y;z,\Gamma^x_y(t)z)=t.
\end{equation*}
We therefore conclude that
\begin{equation}
\begin{split}
  (f_a,f;f_b,(f_a)_b)&=1-b/a\\
  (f_b,f,f_a,(f_b)_a)&=1-a/b
\end{split}\label{eq:12}
\end{equation}
whence a symmetry of the cross-ratio yields
\begin{equation*}
  (f_a,f;f_b,(f_a)_b)=(f_a,f;f_b,(f_b)_a)
\end{equation*}
so that $(f_a)_b=(f_b)_a$.

Our second argument extracts more.  We prove
\begin{equation}
  \label{eq:10}
  \Gamma_{f_a}^{(f_a)_b}(1-t/b)\Gamma^{f_a}_f(1-t/a)=
  \Gamma_{f_a}^{f_b}(\tfrac{1-t/b}{1-t/a})=
  \Gamma_{f_b}^{(f_b)_a}(1-t/a)\Gamma^{f_b}_f(1-t/b).
\end{equation}
\begin{proof}[Proof of \eqref{eq:10}]
  Let $L$ and $M$ denote the left and middle members of \eqref{eq:10}
  respectively.  It suffices to prove $L=M$ as the remaining
  equality follows by swopping the roles of $a$ and $b$.  For $L=M$,
  we note that $L$ and $M$ agree on $f_a$ and $f_a^{\perp}/f_a$ so,
  since both are orthogonal, it is enough to show that $Lf_b=f_b$ or,
  equivalently,
  \begin{equation*}
    \Gamma_f^{f_a}(1-t/a)f_b=\Gamma_{(f_a)_b}^{f_a}(1-t/b)f_b.
  \end{equation*}
  However, these are rational parametrisations of the same circle
  that agree at $\infty,0,b$ and so everywhere.
\end{proof}

We now evaluate \eqref{eq:10} on $f$ to get
\begin{equation*}
  \Gamma_{f_a}^{(f_a)_b}(1-t/b)f=
  \Gamma_{f_a}^{f_b}(\tfrac{1-t/b}{1-t/a})f=
  \Gamma_{f_b}^{(f_b)_a}(1-t/a)f
\end{equation*}
and then take $t=\infty$ to conclude
\begin{equation*}
  (f_a)_b=\Gamma_{f_a}^{f_b}(a/b)f=(f_b)_a.
\end{equation*}
As a consequence $(f_b,f_a;f,f_{ab})=a/b$ which is another version of
\eqref{eq:12}.

The same argument proves the cube theorem: introduce a third Darboux
transform $f_c$ and so three surfaces $f_{ab}, f_{bc},f_{ac}$.  This
gives rise to three new Bianchi quadrilaterals which we show share a
common surface.  To do this, evaluate \eqref{eq:10} on $f_c$ at $t=c$
to get
\begin{equation*}
  \Gamma_{f_a}^{f_{ab}}(1-c/b)f_{ac}=\Gamma_{f_a}^{f_b}(\tfrac{1-c/b}{1-c/a})f_c
  =\Gamma_{f_b}^{f_{ab}}(1-c/a)f_{bc}.
\end{equation*}
Here the left member is the simultaneous Darboux transform of
$f_{ab}$ and $f_{ac}$ and the right is the simultaneous transform of
$f_{ab}$ and $f_{bc}$.  The remaining equality follows by symmetry
and the theorem is proved.  As a bonus, we see that
$f_a,f_b,f_c,f_{abc}$ are also concircular with cross ratio
$\tfrac{1-c/b}{1-c/a}$.

\subsection{Discrete isothermic surfaces}
\label{sec:discr-isoth-surf}

View $\Z^2$ as the vertices of a combinatorial structure with edges
between adjacent vertices and quadrilateral faces.  We denote the
directed edge from $i$ to $j$ by $(j,i)$ and label the faces of
an elementary quadrilateral as follows:
\begin{center}
  \begin{tikzpicture}[x=1.0cm,y=1.0cm]
    \draw[black] (0,0) --(0,2)--(2,2)--(2,0)--(0,0);
    \draw[fill] (0,0) node[left] {$i$} circle[radius=1pt];
    \draw[fill] (2,0) node[right] {$j$} circle[radius=1pt];
    \draw[fill] (0,2) node[left] {$l$}circle[radius=1pt];
    \draw[fill] (2,2) node[right] {$k$} circle[radius=1pt];
  \end{tikzpicture}
\end{center}
According to Bobenko--Pinkall \cite{BobPin96}, a discrete
isothermic surface is a map $f:\Z^2\to S^3=\P(\cL)$
along with a \emph{factorising function} $a$
from undirected edges to $\R^{\times}$ such that
\begin{itemize}
\item $a$ is equal on opposite edges: $a(i,j)=a(l,k)$ and
  $a(i,l)=a(j,k)$.
\item $f$ has concircular values on each elementary quadrilateral
  with cross-ratio given by
  \begin{equation*}
    (f(l),f(j);f(i),f(k))=a(i,j)/a(i,l).
  \end{equation*}
\end{itemize}
Thus the geometry is the pointwise geometry of Bianchi quadrilaterals
of smooth isothermic surfaces.

\subsection{Discrete gauge theory}
\label{sec:discr-gauge-theory}

The idea of discrete gauge theory is to replace connections by
parallel transport and vanishing curvature by trivial holonomy.  Here
are the main ingredients:
\begin{itemize}
\item A \emph{discrete vector bundle} $V$ of rank $n$ assigns a
  $n$-dimensional vector space $V_i$ to each $i\in\Z^2$.

  For example, the trivial bundle $\underline{\R}^{4,1}$ has
  $\R^{4,1}_i=\R^{4,1}$ for each $i$.
\item A \emph{section} of $V$ is a map $\sigma:\Z^2\to\bigsqcup_iV_i$
  such that $\sigma(i)\in V_i$, for all $i$.
\item A \emph{discrete connection} $\Gamma$ on $V$, assigns to each
  directed edge $(j,i)$ a linear isomorphism $\Gamma_{ji}:V_i\to V_j$
  such that
  \begin{equation*}
    \Gamma_{ij}=\Gamma_{ji}^{-1}.
  \end{equation*}
  Example: the trivial connection $1$ on $\underline{\R}^{4,1}$ has
  $1_{ji}=1$, for all edges $(j,i)$.
\item A section of $V$ is \emph{parallel} for $\Gamma$ if
  $\sigma(j)=\Gamma_{ji}\sigma(i)$, for all edges $(j,i)$.
\item A \emph{discrete gauge transformation} assigns to each $i$ a
  linear isomorphism $g(i):V_i\to V_{i}$.

  These act on connections by
  $(g\cdot\Gamma)_{ji}=g(j)\circ\Gamma_{ji}\circ g(i)^{-1}$.
\item A connection $\Gamma$ is \emph{flat} if, on every elementary
  quadrilateral we have
  \begin{equation*}
    \Gamma_{il}\Gamma_{lk}\Gamma_{kj}\Gamma_{ji}=1
  \end{equation*}
  or, equivalently,
  \begin{equation*}
    \Gamma_{kl}\Gamma_{li}=\Gamma_{kj}\Gamma_{ji}.
  \end{equation*}
  In this case, we can find a \emph{trivialising gauge}
  $T:V\to\underline{\R}^n$ such that
  \begin{equation*}
    T\cdot\Gamma=1,
  \end{equation*}
  that is, $T(i):V_i\cong\R^n$ with
  \begin{equation*}
    \Gamma_{ji}=T(j)^{-1}T(i).
  \end{equation*}
  We now have parallel sections through any point of $V$ via
  $\sigma=T^{-1}x_0$ for constant $x_0\in\R^n$.
\end{itemize}

\subsection{Gauge theory of discrete isothermic surfaces}
\label{sec:gauge-theory-discr}

Given $f:\Z^2\to S^{3}=\P(\cL)$ and a factorising function $a$ on edges,
equal on opposite edges, we define a family of connections $\Gamma^t$
on $\underline{\R}^{4,1}$ by:
\begin{equation*}
  \Gamma^t_{ji}=\Gamma_{f(i)}^{f(j)}(1-t/a(i,j)).
\end{equation*}
The arguments of section~\ref{sec:permutability-1} and especially
\eqref{eq:10} essentially establish the following result:
\begin{thm}
  $f$ is discrete isothermic with factorising function $a$ if and only if
  $\Gamma^t$ is flat for all $t\in\R$.
\end{thm}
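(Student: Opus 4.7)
The plan is to reduce the flatness of $\Gamma^t$ on an elementary quadrilateral to the identity already established in \eqref{eq:10}. With $\alpha := a(i,j)=a(l,k)$ and $\beta := a(i,l)=a(j,k)$, the flatness condition $\Gamma^t_{kl}\Gamma^t_{li}=\Gamma^t_{kj}\Gamma^t_{ji}$ on the face reads
\begin{equation*}
\Gamma^{f(k)}_{f(l)}(1-t/\alpha)\,\Gamma^{f(l)}_{f(i)}(1-t/\beta)=\Gamma^{f(k)}_{f(j)}(1-t/\beta)\,\Gamma^{f(j)}_{f(i)}(1-t/\alpha),
\end{equation*}
which matches the outer equality of \eqref{eq:10} under the dictionary $(f,f_a,f_b,f_{ab})=(f(i),f(j),f(l),f(k))$ and $(a,b)=(\alpha,\beta)$.

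For the forward direction, the discrete isothermic hypothesis supplies the cross-ratio $(f(l),f(j);f(i),f(k))=\alpha/\beta$, which casts $f(k)$ as the simultaneous Darboux transform of $f(j)$ and $f(l)$ over the base $f(i)$. Under this identification, the proof of \eqref{eq:10} applies verbatim: both sides lie in $\rO_+(4,1)$ and act identically on $f(j)$ and on $f(j)^{\perp}/f(j)$, so equality reduces to checking the action on $f(l)$; there, both sides yield rational parametrisations of the common circle through the four concircular points that agree at three values ($t=0,\alpha,\infty$), hence coincide.

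For the converse, assume $\Gamma^t$ is flat for all $t\in\R$. I would apply the flatness identity projectively to the null line $f(i)$: since $\Gamma^{f(l)}_{f(i)}(\cdot)$ and $\Gamma^{f(j)}_{f(i)}(\cdot)$ each fix $f(i)$ up to scalar, one is left with the projective equality
\begin{equation*}
\Gamma^{f(k)}_{f(l)}(1-t/\alpha)\,f(i)=\Gamma^{f(k)}_{f(j)}(1-t/\beta)\,f(i)\quad\text{in }\P(\cL).
\end{equation*}
The left hand side parametrises the circle through $f(i),f(l),f(k)$ and the right the circle through $f(i),f(j),f(k)$, so the two circles coincide and $f(i),f(j),f(k),f(l)$ are concircular. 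The two M\"obius parametrisations of this common circle agree at $t=0,\infty$; equating their values at $t=\alpha$ (where the left lands on $f(l)$) extracts $(f(k),f(j);f(i),f(l))=1-\alpha/\beta$, which by a cross-ratio symmetry is the discrete isothermic condition $(f(l),f(j);f(i),f(k))=\alpha/\beta$.

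The main subtlety is in the forward direction, where concircularity plus the cross-ratio only produces projective agreement but flatness demands an equality of orthogonal linear maps; the argument of \eqref{eq:10} bridges this gap by exploiting orthogonality together with matching on the flag $0\subset f(j)\subset f(j)^{\perp}\subset\underline{\R}^{4,1}$, reducing the full linear identity to what can be read off from the action on the single null line $f(l)$.
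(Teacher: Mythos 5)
Your proposal is correct and is essentially the paper's own argument: the paper proves this theorem precisely by relabelling \eqref{eq:10} on each elementary quadrilateral (the proof of \eqref{eq:10} uses only the pointwise concircularity and cross-ratio data of a Bianchi quadrilateral, so transfers verbatim), and your converse, evaluating the flatness identity projectively on $f(i)$ to recover concircularity and the cross-ratio, is exactly the intended reading of the paper's one-line remark. The only blemish is presentational: the two outer members of the flatness identity do not both preserve the flag $f(j)\subset f(j)^{\perp}$ a priori (the left one has eigenlines built from $f(i),f(l),f(k)$), so the flag argument really compares each outer member with the \emph{middle} member $\Gamma_{f(j)}^{f(l)}\bigl(\tfrac{1-t/\beta}{1-t/\alpha}\bigr)$ of \eqref{eq:10} --- once at $f(j)$ and once, by the $\alpha\leftrightarrow\beta$, $j\leftrightarrow l$ symmetry, at $f(l)$ --- which is covered by your ``applies verbatim'' but not by your literal phrasing.
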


We may now apply all our previous gauge theoretic arguments in this
new setting!  We give just one example: a Darboux transform of a
discrete isothermic surface should be given by a parallel null line
subbundle.  To verify this, fix $\hat{a}\in\R^{\times}$ and let
$\hf\leq\underline{\R}^{4,1}$ be a null line subbundle such that
\begin{enumerate}
\item $\hf$ is $\Gamma^{\hat{a}}$-parallel.
\item $f(i)\cap\hf=\set0$, for all $i$.
\end{enumerate}
Spelling out the parallel condition gives
\begin{equation*}
  \Gamma_{f(i)}^{f(j)}(1-\hat{a}/a(i,j))\hf(i)=\hf(j)
\end{equation*}
so that $f(i),f(j),\hf(i),\hf(j)$ are concircular with fixed cross
ratio $a(i,j)/\hat{a}$.  Relabelling \eqref{eq:10} to describe this
quadrilateral gives
\begin{equation*}
  \Gamma_{f(j)}^{\hf(j)}(1-t/\hat{a})\Gamma_{f(i)}^{f(j)}(1-t/a(i,j))=
  \Gamma_{\hf(i)}^{\hf(j)}(1-t/a(i,j))\Gamma_{f(i)}^{f(i)}(1-t/\hat{a}).
\end{equation*}
Otherwise said:
\begin{equation*}
  \Gamma_f^{\hf}(1-t/\hat{a})\cdot \Gamma^{t}=\hat{\Gamma}^t.
\end{equation*}
Now $\hat{\Gamma}^t$ is flat for all $t$ being a gauge of flat
$\Gamma^t$ so that $\hf$ is indeed isothermic with the same
factorising function as $f$.

We remark that we can iterate this construction and so build up a map
$F:\Z^3\to S^3$ whose restrictions to level sets
$\set{(n_1,n_2,n_3)\in\Z^3\colon n_3=m}$ are our iterated Darboux
transforms.  Moreover, the two other families of level sets obtained
by holding $n_1$ or $n_2$ fixed also consist (as we have just seen)
of concircular quadrilaterals with factorising cross-ratios and so
are isothermic also.  We therefore have a triple system of discrete
isothermic surfaces!

\begin{ex}
  Find a spectral deformation for discrete isothermic surfaces.
\end{ex}

\begin{bibdiv}
\begin{biblist}

\bib{Bac83}{article}{
      author={B\"acklund, A.-V.},
       title={Om ytor med kostant negativ kr\"okning},
        date={1883},
     journal={Lunds Universitets \o{A}rsskrift},
      volume={XIX},
}

\bib{Bia05a}{article}{
      author={Bianchi, L.},
       title={Complementi alle ricerche sulle superficie isoterme},
        date={1905},
     journal={Ann. di Mat.},
      volume={12},
       pages={19\ndash 54},
}

\bib{Bia05}{article}{
      author={Bianchi, L.},
       title={Ricerche sulle superficie isoterme e sulla deformazione delle
  quadriche},
        date={1905},
     journal={Ann. di Mat.},
      volume={11},
       pages={93\ndash 157},
}

\bib{Bia79}{article}{
      author={Bianchi, Luigi},
       title={Ricerche sulle superficie elicoidali e sulle superficie a
  curvatura costante},
        date={1879},
        ISSN={0391-173X},
     journal={Ann. Scuola Norm. Sup. Pisa Cl. Sci.},
      volume={2},
       pages={285\ndash 341},
         url={http://www.numdam.org/item?id=ASNSP_1879_1_2__285_0},
      review={\MR{1556595}},
}

\bib{Bia92}{article}{
      author={Bianchi, Luigi},
       title={Sulle trasformazione di B\"acklund per le superficie
  pseudosferiche},
        date={1892},
     journal={Rend. Lincei},
      volume={5},
       pages={3\ndash 12},
}

\bib{BobPin96}{article}{
      author={Bobenko, Alexander},
      author={Pinkall, Ulrich},
       title={Discrete isothermic surfaces},
        date={1996},
        ISSN={0075-4102},
     journal={J. Reine Angew. Math.},
      volume={475},
       pages={187\ndash 208},
         url={http://dx.doi.org/10.1515/crll.1996.475.187},
      review={\MR{MR1396732 (97f:53004)}},
}

\bib{BobSch99}{incollection}{
      author={Bobenko, Alexander~I.},
      author={Schief, Wolfgang~K.},
       title={Discrete indefinite affine spheres},
        date={1999},
   booktitle={Discrete integrable geometry and physics ({V}ienna, 1996)},
      series={Oxford Lecture Ser. Math. Appl.},
      volume={16},
   publisher={Oxford Univ. Press},
     address={New York},
       pages={113\ndash 138},
      review={\MR{MR1676596 (2001e:53012)}},
}

\bib{Bou62}{article}{
      author={Bour, Edmond},
       title={Th\'eorie de la d\'eformation des surfaces},
        date={1862},
     journal={J. L'\'Ecole Imp\'eriale Polytechnique},
       pages={1\ndash 148},
}

\bib{Bur06}{incollection}{
      author={Burstall, F.~E.},
       title={Isothermic surfaces: conformal geometry, {C}lifford algebras and
  integrable systems},
        date={2006},
   booktitle={Integrable systems, geometry, and topology},
      series={AMS/IP Stud. Adv. Math.},
      volume={36},
   publisher={Amer. Math. Soc.},
     address={Providence, RI},
       pages={1\ndash 82},
      review={\MR{MR2222512 (2008b:53006)}},
}

\bib{BurCal}{unpublished}{
      author={Burstall, F.E.},
      author={Calderbank, D.},
       title={Conformal submanifold geometry IV--V},
        status={In preparation},
}

\bib{BurCal10}{misc}{
      author={Burstall, F.E.},
      author={Calderbank, D.M.J.},
       title={{C}onformal submanifold geometry {I}--{I}{I}{I}},
        date={2010},
}

\bib{BurHer02}{article}{
      author={Burstall, Francis},
      author={Hertrich-Jeromin, Udo},
       title={Harmonic maps in unfashionable geometries},
        date={2002},
        ISSN={0025-2611},
     journal={Manuscripta Math.},
      volume={108},
      number={2},
       pages={171\ndash 189},
         url={http://dx.doi.org/10.1007/s002290200269},
      review={\MR{MR1918585 (2003f:53114)}},
}

\bib{BurPedPin02}{incollection}{
      author={Burstall, Francis},
      author={Pedit, Franz},
      author={Pinkall, Ulrich},
       title={Schwarzian derivatives and flows of surfaces},
        date={2002},
   booktitle={Differential geometry and integrable systems ({T}okyo, 2000)},
      series={Contemp. Math.},
      volume={308},
   publisher={Amer. Math. Soc.},
     address={Providence, RI},
       pages={39\ndash 61},
      review={\MR{MR1955628 (2004f:53010)}},
}

\bib{BurDonPedPin11}{article}{
      author={Burstall, Francis~E.},
      author={Donaldson, Neil~M.},
      author={Pedit, Franz},
      author={Pinkall, Ulrich},
       title={Isothermic submanifolds of symmetric {$R$}-spaces},
        date={2011},
        ISSN={0075-4102},
     journal={J. Reine Angew. Math.},
      volume={660},
       pages={191\ndash 243},
         url={http://dx.doi.org/10.1515/crelle.2011.075},
      review={\MR{2855825}},
}

\bib{BurQui14}{article}{
      author={Burstall, Francis~E.},
      author={Quintino, {\'A}urea~C.},
       title={Dressing transformations of constrained {W}illmore surfaces},
        date={2014},
        ISSN={1019-8385},
     journal={Comm. Anal. Geom.},
      volume={22},
      number={3},
       pages={469\ndash 518},
         url={http://dx.doi.org/10.4310/CAG.2014.v22.n3.a4},
      review={\MR{3228303}},
}

\bib{Cal03}{article}{
      author={Calapso, P.},
       title={Sulle superficie a linee di curvatura isoterme},
        date={1903},
     journal={Rendiconti Circolo Matematico di Palermo},
      volume={17},
       pages={275\ndash 286},
}

\bib{Chr67}{article}{
      author={Christoffel, E.},
       title={Ueber einige allgemeine {E}igenshaften der {M}inimumsfl\"achen},
        date={1867},
     journal={Crelle's J.},
      volume={67},
       pages={218\ndash 228},
}

\bib{Cla12}{thesis}{
      author={Clarke, D.J.},
      title={Integrability in submanifold geometry},
        type={Bath Ph.D. Thesis},
        date={2012},
}

\bib{Cra87}{article}{
      author={Crane, Louis},
       title={Action of the loop group on the self-dual {Y}ang-{M}ills
  equation},
        date={1987},
        ISSN={0010-3616},
     journal={Comm. Math. Phys.},
      volume={110},
      number={3},
       pages={391\ndash 414},
         url={http://projecteuclid.org/getRecord?id=euclid.cmp/1104159312},
      review={\MR{MR891944 (88i:58167)}},
}

\bib{Dar87}{book}{
      author={Darboux, G.},
       title={Le\c cons sur la th\'eorie g\'en\'erale des surfaces et les
  applications g\'eom\'etriques du calcul infinit\'esimal. {P}arts 1 and 2},
   publisher={Gauthier-Villars},
     address={Paris},
        date={1887},
}

\bib{Dar99e}{article}{
      author={Darboux, G.},
       title={Sur les surfaces isothermiques},
        date={1899},
     journal={C.R. Acad. Sci. Paris},
      volume={128},
       pages={1299\ndash 1305, 1538},
}

\bib{Dem10}{article}{
      author={Demoulin, A.},
       title={{S}ur les syst\`emes et les congruences {${K}$}},
        date={1910},
     journal={C. R. Acad.\ Sci.\ Paris S\'er.\ I Math.},
      volume={150},
       pages={150, 156\ndash 159, 310\ndash 312},
}

\bib{FerSch99}{article}{
      author={Ferapontov, E.~V.},
      author={Schief, W.~K.},
       title={Surfaces of {D}emoulin: differential geometry, {B}\"acklund
  transformation and integrability},
        date={1999},
        ISSN={0393-0440},
     journal={J. Geom. Phys.},
      volume={30},
      number={4},
       pages={343\ndash 363},
         url={http://dx.doi.org/10.1016/S0393-0440(98)00064-3},
      review={\MR{MR1700564 (2001i:53018)}},
}

\bib{Lie79}{article}{
      author={Lie, Sophus},
       title={Ueber fl\"achen, deren kr\"ummungsradien durch eine relation
  verkn\"upft sind},
        date={1879},
     journal={Archiv for Mathematik og Naturvidenskab},
      volume={IV},
       pages={507\ndash 512},
}

\bib{Pem15}{thesis}{
      author={Pember, Mason},
       title={Special surface classes},
        type={Bath Ph.D. Thesis},
        date={2015},
}

\bib{Sch01}{article}{
      author={Schief, W.~K.},
       title={Isothermic surfaces in spaces of arbitrary dimension:
  integrability, discretization, and {B}\"acklund transformations---a discrete
  {C}alapso equation},
        date={2001},
        ISSN={0022-2526},
     journal={Stud. Appl. Math.},
      volume={106},
      number={1},
       pages={85\ndash 137},
         url={http://dx.doi.org/10.1111/1467-9590.00162},
      review={\MR{MR1805487 (2002k:37140)}},
}

\bib{Sym85}{incollection}{
      author={Sym, Antoni},
       title={Soliton surfaces and their applications (soliton geometry from
  spectral problems)},
        date={1985},
   booktitle={Geometric aspects of the {E}instein equations and integrable
  systems ({S}cheveningen, 1984)},
      series={Lecture Notes in Phys.},
      volume={239},
   publisher={Springer},
     address={Berlin},
       pages={154\ndash 231},
      review={\MR{MR828048 (87g:58056)}},
}

\bib{TerUhl00}{article}{
      author={Terng, Chuu-Lian},
      author={Uhlenbeck, Karen},
       title={B\"acklund transformations and loop group actions},
        date={2000},
        ISSN={0010-3640},
     journal={Comm. Pure Appl. Math.},
      volume={53},
      number={1},
       pages={1\ndash 75},
  url={http://dx.doi.org/10.1002/(SICI)1097-0312(200001)53:1<1::AID-CPA1>3.3.CO;2-L},
      review={\MR{MR1715533 (2000k:37116)}},
}

\bib{Uhl89}{article}{
      author={Uhlenbeck, Karen},
       title={Harmonic maps into {L}ie groups: classical solutions of the
  chiral model},
        date={1989},
        ISSN={0022-040X},
     journal={J. Differential Geom.},
      volume={30},
      number={1},
       pages={1\ndash 50},
         url={http://projecteuclid.org/getRecord?id=euclid.jdg/1214443286},
      review={\MR{MR1001271 (90g:58028)}},
}

\bib{War85}{article}{
      author={Ward, R.~S.},
       title={Integrable and solvable systems, and relations among them},
        date={1985},
        ISSN={0080-4614},
     journal={Philos. Trans. Roy. Soc. London Ser. A},
      volume={315},
      number={1533},
       pages={451\ndash 457},
         url={http://dx.doi.org/10.1098/rsta.1985.0051},
      review={\MR{MR836745 (87e:58105)}},
}

\end{biblist}
\end{bibdiv}

\end{document}